\newtheorem{theorem}{Theorem}
\newtheorem{corollary}[theorem]{Corollary}
\newtheorem{lemma}[theorem]{Lemma}
\newtheorem{definition}[theorem]{Definition}
\chardef\bslash=`\\ 
\newcommand{\wh}{\widehat}
\newcommand{\dA}{{\dot A}}
\newcommand{\bbR}{{\mathbb{R}}}
\newcommand{\bbC}{{\mathbb{C}}}
\newcommand{\ran}{\text{\rm{Ran}}}
\newcommand{\dom}{\text{\rm{Dom}}}
\newcommand{\calH}{{\mathcal H}}
\newcommand{\calR}{{\mathcal R}}
\newcommand{\mM}{\mathfrak M}
\def\sM{{\mathfrak M}}   \def\sN{{\mathfrak N}}
\def\bA{{\mathbb A}}      \def\dC{{\mathbb C}}
      \def\dF{{\mathbb F}}
\def\dM{{\mathbb M}}      
      \def\dR{{\mathbb R}}
   \def\cH{{\mathcal H}}
\def\RE{{\rm Re\,}}
\def\Ker{{\rm Ker\,}}
\def\wh{\hat}
\def\uphar{{\upharpoonright\,}}
\DeclareMathOperator{\IM}{Im}
\newcommand{\eval}[2][\right]{\relax
  \ifx#1\right\relax \left.\fi#2#1\rvert}
\begin{document}

\title{On unimodular transformations of conservative L-systems}

\author{S. Belyi}
\address{Department of Mathematics\\ Troy State University\\
Troy, AL 36082, USA\\
}
\curraddr{}
\email{sbelyi@troy.edu}

\author{K. A. Makarov}
\address{Department of Mathematics, University of Missouri, Columbia, MO 63211, USA}
\email{makarovk@missouri.edu}

\author{E. Tsekanovski\u i}
\address{Department of Mathematics,\\ Niagara University, New York
14109\\ USA}
\email{tsekanov@niagara.edu}

\subjclass[2010]{Primary: 81Q10, Secondary: 35P20, 47N50}

\dedicatory{Dedicated with great pleasure to Heinz Langer on the occasion of his 80-th birthday }

\keywords{L-system, transfer function, impedance function,  Herglotz-Nevanlinna function, Weyl-Titchmarsh function, Liv\v{s}ic function, characteristic function,
Donoghue class, symmetric operator, dissipative extension, von Neumann parameter.}

\begin{abstract}
We study unimodular transformations of conservative $L$-systems. Classes $\sM^Q$, $\sM^Q_\kappa$, $\sM^{-1,Q}_\kappa$ that are impedance functions of the corresponding $L$-systems are introduced. A unique unimodular transformation of a given $L$-system with impedance function from the mentioned above classes is found such that the impedance function of a new $L$-system belongs to  $\sM^{(-Q)}$, $\sM^{(-Q)}_\kappa$, $\sM^{-1,(-Q)}_\kappa$, respectively. As a result we get that considered classes (that are perturbations of the Donoghue classes of Herglotz-Nevanlinna functions with an arbitrary real constant $Q$) are invariant under the corresponding unimodular transformations of $L$-systems. We define a coupling of an $L$-system and a so called $F$-system and on its basis obtain a multiplication theorem for their transfer functions. In particular, it is shown
that any unimodular transformation of a given $L$-system is equivalent to a coupling of this system and the corresponding controller, an $F$-system with a constant unimodular transfer function.
In addition, we derive  an explicit form of a controller responsible for a corresponding unimodular transformation of an $L$-system. Examples that illustrate the developed approach are presented.
 \end{abstract}

\maketitle

\section{Introduction}\label{s1}

This paper is yet another part of an ongoing project studying the connections between various subclasses of Herglotz-Nevanlinna functions and conservative realizations of L-systems with one-dimensional input-output space (see  \cite{ABT}, \cite{BMkT}, \cite{BMkT-2}, \cite{MT-S}, \cite{MT10}).

Let $T$ be a densely defined closed operator in a Hilbert space $\cH$ such that its resolvent set $\rho(T)$ is not empty. We also assume that
$\dom(T)\cap \dom(T^*)$ is dense and that the restriction $T|_{\dom(T)\cap \dom(T^*)}$ is a closed symmetric operator with finite equal deficiency indices.
Let $\calH_+\subset\calH\subset\calH_-$ be the rigged Hilbert space associated with $\dot A$.

One of the main objectives  of the current paper is the study of the \textit{L-system}
\begin{equation}
\label{col0}
 \Theta =
\left(%
\begin{array}{ccc}
  \bA    & K & J \\
   \calH_+\subset\calH\subset\calH_- &  & E \\
\end{array}%
\right).
\end{equation}
where the \textit{state-space operator} $\bA$ is a bounded linear operator from
$\calH_+$ into $\calH_-$ such that  $\dA \subset T\subset \bA$, $\dA^* \subset T^* \subset \bA$,
$K$ is a bounded linear operator from the finite-dimensional Hilbert space $E$ into $\calH_-$,  $J=J^*=J^{-1}$ is a self-adjoint isometry  on $E$
such that $\IM\bA=KJK^*$. Due to the facts that $\calH_\pm$ is dual to $\calH_\mp$ and  {that} $\bA^*$ is a bounded linear operator from $\calH_+$ into $\calH_-$, $\IM\bA=(\bA-\bA^*)/2i$ is a well defined bounded operator from $\calH_+$  into $\calH_-$.
Note that the main operator $T$ associated with the system $\Theta$ is uniquely determined by the state-space operator $\bA$ as its restriction onto the domain $\dom(T)=\{f\in\calH_+\mid \bA f\in\calH\}$. A detailed description  of the $L$-systems together with their connections to various subclasses of Herglotz-Nevanlinna functions can be found in \cite{ABT} (see also \cite{AlTs2}, \cite{AlTs3}, \cite{BT3}, \cite{BMkT}, \cite{BMkT-2}, \cite{Bro}).

Recall that the operator-valued function  given by
\begin{equation*}\label{W1}
 W_\Theta(z)=I-2iK^*(\bA-zI)^{-1}KJ,\quad z\in \rho(T),
\end{equation*}
 is called the \textit{transfer function}  of the L-system $\Theta$ and
\begin{equation*}\label{real2}
 V_\Theta(z)=i[W_\Theta(z)+I]^{-1}[W_\Theta(z)-I] =K^*(\RE\bA-zI)^{-1}K,\quad z\in\rho(T)\cap\dC_{\pm},
\end{equation*}
is called the \textit{impedance function } of $\Theta$.

In addition to L-systems we also recall (see \cite{HST3}, \cite{ABT}) the definition of F-systems of the form
\begin{equation*}
 \Theta_{F}=
         \begin{pmatrix}
          M    & F  & K & J  \\
          \calH  & &  & E
         \end{pmatrix},
\end{equation*}
that will play an auxiliary role in our development.

The main  goal of the paper is to study the effect of a \textit{unimodular transformation } applied to an L-system with one-dimensional input-output space. A new twist in our exposition is introducing the concept  of LF-coupling of systems and a controller. Applying the latter to an L-system has an effect equivalent to a corresponding unimodular transformation.

The paper is organized as follows.

In Section \ref{s2}  we recall the definitions of L- and F-systems, their transfer and impedance functions, and provide necessary background.

In Section \ref{s3} we introduce the concept of an LF-coupling that is a coupling of an L-system and an F-system. We also obtain a multiplication theorem of relating transfer functions of LF-coupling and both individual L- and F-system being coupled this way.

In Section \ref{s4} we present the ``perturbed" classes $\sM^Q$, $\sM^Q_\kappa$, and $\sM^{-1,Q}_\kappa$ of impedance functions of L-systems with one-dimensional input-output space.

Section \ref{s5} contains the definition of a unimodular transformation of an L-system of the type considered in Section \ref{s4} and main results of the paper.  Here we construct a unique unimodular transformation of a given $L$-system with impedance function from $\sM^Q$, $\sM^Q_\kappa$, and $\sM^{-1,Q}_\kappa$ classes  such that the impedance function of a new $L$-system belongs to  $\sM^{(-Q)}$, $\sM^{(-Q)}_\kappa$, $\sM^{-1,(-Q)}_\kappa$, respectively.

In Section \ref{s6} we put forward a concept of a controller that is a special form of an F-system with a constant unimodular transfer function. We show that any unimodular transformation of a given $L$-system is equivalent to a coupling of this system with the corresponding controller. In the end of the section we also present an analog of the ``absorbtion property" for the Donoghue class $\sM$ that was discussed in \cite{BMkT-2}.

We conclude the paper by providing several examples that illustrate all the main results and concepts. Connections of the considered systems and the corresponding differential equations are pointed out in Appendix \ref{A1}.

\section{Preliminaries}\label{s2}

For a pair of Hilbert spaces $\calH_1$, $\calH_2$ we denote by $[\calH_1,\calH_2]$ the set of all bounded linear operators from $\calH_1$ to $\calH_2$. Let $\dA$ be a closed, densely defined,
symmetric operator in a Hilbert space $\calH$ with inner product $(f,g),f,g\in\calH$. Any non-symmetric operator $T$ in $\cH$ such that
\[
\dA\subset T\subset\dA^*
\]
is called a \textit{quasi-self-adjoint extension} of $\dA$.

 Consider the rigged Hilbert space (see \cite{Ber}, \cite{BT3})
$\calH_+\subset\calH\subset\calH_- ,$ where $\calH_+ =\dom(\dA^*)$ and
\begin{equation}\label{108}
(f,g)_+ =(f,g)+(\dA^* f, \dA^*g),\;\;f,g \in \dom(A^*).
\end{equation}
Let $\calR$ be the \textit{\textrm{Riesz-Berezansky   operator}} $\calR$ (see \cite{Ber}, \cite{BT3}) which maps $\mathcal H_-$ onto $\mathcal H_+$ such
 that   $(f,g)=(f,\calR g)_+$ ($\forall f\in\calH_+$, $g\in\calH_-$) and
 $\|\calR g\|_+=\| g\|_-$.
 Note that
identifying the space conjugate to $\calH_\pm$ with $\calH_\mp$, we
get that if $\bA\in[\calH_+,\calH_-]$, then
$\bA^*\in[\calH_+,\calH_-].$
An operator $\bA\in[\calH_+,\calH_-]$ is called a \textit{self-adjoint
bi-extension} of a symmetric operator $\dA$ if $\bA=\bA^*$ and $\bA
\supset \dA$.
Let $\bA$ be a self-adjoint
bi-extension of $\dA$ and let the operator $\hat A$ in $\cH$ be defined as follows:
\[
\dom(\hat A)=\{f\in\cH_+:\hat A f\in\cH\}, \quad \hat A=\bA\uphar\dom(\hat A).
\]
The operator $\hat A$ is called a \textit{quasi-kernel} of a self-adjoint bi-extension $\bA$ (see \cite{T69}, \cite{TSh1}, \cite[Section 2.1]{ABT}).
According to the von Neumann Theorem (see \cite[Theorem 1.3.1]{ABT}) the domain of $\wh A$, a self-adjoint extension of $\dA$,  can be expressed as
\begin{equation}\label{DOMHAT}
\dom(\hat A)=\dom(\dA)\oplus(I+U)\sN_{i},
\end{equation}
where $U$ is a $(\cdot)$ (and $(+)$)-isometric operator from $\sN_i$ into $\sN_{-i}$
 and $$\sN_{\pm i}=\Ker (\dA^*\mp i I)$$ are the deficiency subspaces of $\dA$.
 A self-adjoint bi-extension $\bA$ of a symmetric operator $\dA$ is called \textit{t-self-adjoint} (see \cite[Definition 3.3.5]{ABT}) if its quasi-kernel $\hat A$ is
self-adjoint operator in $\calH$.
An operator $\bA\in[\calH_+,\calH_-]$  is called a \textit{quasi-self-adjoint bi-extension} of a non-symmetric  operator $T$ if
$\bA\supset T\supset \dA$ and $\bA^*\supset T^*\supset\dA.$ We will be mostly interested in the following type of quasi-self-adjoint bi-extensions.
\begin{definition}[\cite{ABT}]\label{star_ext}
Let $T$ be a quasi-self-adjoint extension of $\dA$ with nonempty
resolvent set $\rho(T)$. A quasi-self-adjoint bi-extension $\bA$ of an operator $T$ is called a \textit{($*$)-extension }of $T$ if $\RE\bA$ is a
t-self-adjoint bi-extension of $\dA$.
\end{definition}
In what follows we assume that $\dA$ has equal finite deficiency indices and will say that a quasi-self-adjoint extension $T$ of $\dA$ belongs to the
\textit{class $\Lambda(\dA)$} if $\rho(T)\ne\emptyset$, $\dom(\dA)=\dom(T)\cap\dom(T^*)$, and hence  $T$ admits $(*)$-extensions. The description of
all $(*)$-extensions via Riesz-Berezansky   operator $\calR$ can be found in \cite[Section 4.3]{ABT}.

\begin{definition} 
A system of equations
\[
\left\{   \begin{array}{l}
          (\bA-zI)x=KJ\varphi_-  \\
          \varphi_+=\varphi_- -2iK^* x
          \end{array}
\right.,
\]
 or an
array
\begin{equation}\label{e6-3-2}
\Theta= \begin{pmatrix} \bA&K&\ J\cr \calH_+ \subset \calH \subset
\calH_-& &E\cr \end{pmatrix}
\end{equation}
 is called an \textbf{{L-system}}   if:
\begin{enumerate}
\item[(1)] {$\mathbb  A$ is a   ($\ast $)-extension of an
operator $T$ of the class $\Lambda(\dA)$};
\item[(2)] {$J=J^\ast =J^{-1}\in [E,E],\quad \dim E < \infty $};
\item[(3)] $\IM\bA= KJK^*$, where $K\in [E,\calH_-]$, $K^*\in [\calH_+,E]$, and
$\ran(K)=\ran (\IM\bA).$
\end{enumerate}
\end{definition}
In the definition above   $\varphi_- \in E$ stands for an input vector, $\varphi_+ \in E$ is an output vector, and $x$ is a state space vector in
$\calH$.
The operator $\bA$  is called the \textit{state-space operator} of the system $\Theta$, $T$ is the \textit{main operator},  $J$ is the \textit{direction operator}, and $K$ is the  \textit{channel operator}. A system $\Theta$ in \eqref{e6-3-2} is called \textit{minimal} if the operator $\dA$ is a prime operator in $\calH$, i.e., there exists no non-trivial reducing
invariant subspace of $\calH$ on which it induces a self-adjoint operator. 

We  associate with an L-system $\Theta$ the operator-valued function
\begin{equation}\label{e6-3-3}
W_\Theta (z)=I-2iK^\ast (\mathbb  A-zI)^{-1}KJ,\quad z\in \rho (T),
\end{equation}
which is called the \textbf{transfer  function} of the L-system $\Theta$. We also consider the operator-valued function
\begin{equation}\label{e6-3-5}
V_\Theta (z) = K^\ast (\RE\bA - zI)^{-1} K, \quad z\in\rho(\hat A).
\end{equation}
It was shown in \cite{BT3}, \cite[Section 6.3]{ABT} that both \eqref{e6-3-3} and \eqref{e6-3-5} are well defined. The transfer operator-function $W_\Theta (z)$ of the system
$ \Theta $ and an operator-function $V_\Theta (z)$ of the form (\ref{e6-3-5}) are connected by the following relations valid for $\IM z\ne0$, $z\in\rho(T)$,
\begin{equation}\label{e6-3-6}
\begin{aligned}
V_\Theta (z) &= i [W_\Theta (z) + I]^{-1} [W_\Theta (z) - I] J,\\
W_\Theta(z)&=(I+iV_\Theta(z)J)^{-1}(I-iV_\Theta(z)J).
\end{aligned}
\end{equation}
The function $V_\Theta(z)$ defined by \eqref{e6-3-5} is called the
\textbf{impedance function} of an L-system $ \Theta $ of the form
(\ref{e6-3-2}). The class of all Herglotz-Nevanlinna functions in a finite-dimensional Hilbert
space $E$, that can be realized as impedance functions of an L-system, was described in \cite{BT3}, \cite[Definition 6.4.1]{ABT}.

Let $A$ be a closed linear operator in a Hilbert space $\calH$ and let $F$ be an orthogonal projection in $\calH$. Associated to the pair $(A,F)$ is the \textbf{resolvent set} $\rho(A,F)$, i.e.,
the set of all $z \in \dC$ for which $A-zF$ is boundedly invertible in $\calH$ and $(A-zF)^{-1}$ is defined on entire $\calH$. The corresponding \textbf{resolvent operator} is defined as $(A-zF)^{-1}$, $z \in \rho(A,F)$.
Following \cite[Chapter 12]{ABT}, \cite{HST3} we put forward the following
\begin{definition}
\label{system} Let $\calH$ and $E$ be Hilbert spaces with $\dim E<\infty$. 
A system of equations
\begin{equation}\label{e12-18}
    \left\{%
\begin{array}{ll}
    (M-zF)x=KJ\varphi_-, & \hbox{} \\
    \varphi_+=\varphi_- -2iK^* x, & \hbox{} \\
\end{array}%
\right.,\quad z\in\rho(M,F).
\end{equation}
or an  array
\begin{equation}
\label{system1}
 \Theta_{F}=
         \begin{pmatrix}
          M    & F  & K & J  \\
          \calH  & &  & E
         \end{pmatrix},
\end{equation}
is called an \textbf{$F$-system} if:
\begin{enumerate}
\def\labelenumi{\rm (\roman{enumi})}
\item $M\in [\calH,\calH]$;
\item $J=J^*=J^{-1} \in[E,E]$;
\item $\IM M=KJK^*$, where $K\in [E,\calH]$;
\item $F$ is an orthogonal projection in $\calH$;
\item the resolvent sets $\rho(\RE M,F)$ and $\rho(M,F)$ are nonempty.
\end{enumerate}
\end{definition}
To each $F$-system  in Definition \ref{system} one can associate the following \textbf{transfer  function}
\begin{equation}
\label{chfunc}
 W_{\Theta_F}(z)=I-2i K^*(M-zF)^{-1}KJ,  \quad  z \in \rho(M,F),
\end{equation}
and the \textbf{impedance function}
\begin{equation}
\label{Vfunc}
 V_{\Theta_{F}}(z)=K^*(\RE M-zF)^{-1}K,  \quad  z\in \rho(\RE M,F).
\end{equation}

Consider the two $F$-systems $\Theta_{F_1}$ and $\Theta_{F_2}$ of
the form \eqref{system1}, defined by
\begin{equation}
\label{e12-25-nn}
 \Theta_{F_1} =\begin{pmatrix}
          M_1 \hspace{1.5mm} F_1    &K_1  &J  \\
          \calH_1    &   &E
         \end{pmatrix},
\end{equation}
and
\begin{equation}
\label{e12-26-nn}
 \Theta_{F_2} =\begin{pmatrix}
          M_2 \hspace{1.5mm} F_2    &K_2  &J    \\
          \calH_2                     &     &E
         \end{pmatrix}.
\end{equation}
Define the Hilbert space $\calH$ by
\begin{equation}
\label{e12-27-nn}
 \calH=\calH_1 \oplus \calH_2,
\end{equation}
and let $P_j$ be the orthoprojections from $\calH$ onto $\calH_j$,
$j=1,2$. Define the operators $M$, $F$, and $K$ by
\begin{equation}
\label{e12-28-nn}
 M=M_1P_1+M_2P_2+2iK_1JK_2^*P_2,
 \quad
 F=F_1P_1+F_2P_2,
 \quad
 K=K_1+K_2.
\end{equation}
It is shown in \cite[Theorem 12.2.1]{ABT}, \cite{HST3} that if  $\Theta_{F_1}$ is the $F_1$-system in \eqref{e12-25-nn} and let $\Theta_{F_2}$ is the $F_2$-system in \eqref{e12-26-nn}, then the aggregate
\begin{equation}\label{e12-29-nn}
 \Theta =\begin{pmatrix}
          M \hspace{1.5mm} F    &K  &J  \\
          \calH                   &   &E
         \end{pmatrix},
\end{equation}
with $\calH$, $M$, $F$, and $K$, defined by \eqref{e12-27-nn} and \eqref{e12-28-nn}, is also an $F$-system. This $F$-system $\Theta$ in \eqref{e12-29-nn} is called the \textbf{coupling} of  the $F_1$-system $\Theta_{F_1}$ and the $F_2$-system $\Theta_{F_2}$. It is denoted by
\[
 \Theta=\Theta_{F_1}\cdot \Theta_{F_2}.
\]
It is also shown in \cite[Theorem 12.2.2]{ABT}, \cite{HST3} that if an $F$-system $\Theta$ is the coupling of the $F_1$-system $\Theta_{F_1}$ and the $F_2$-system $\Theta_{F_2}$, then  the associated transfer functions satisfy
\begin{equation}\label{e12-30-nn}
 W_\Theta(z)= W_{\Theta_{F_1}}(z) W_{\Theta_{F_2}}(z),
 \quad
 z \in \rho(M_1,F_1) \cap \rho(M_2,F_2).
\end{equation}

\section{Mixed coupling of $L$-systems and $F$-systems}\label{s3}

Consider an $L$-system $\Theta_{L}$ and an $F$-system $\Theta_{F}$ of
the forms \eqref{e6-3-2} and \eqref{system1}, respectively, and  defined by
\begin{equation}
\label{e12-25-n}
 \Theta_{L} =\begin{pmatrix}
          \bA    &K_1  &J  \\
          \calH_{+1} \subset \calH_1 \subset \calH_{-1}    &   &E
         \end{pmatrix},
\end{equation}
and
\begin{equation}\label{e12-26-n}
 \Theta_{F} =\begin{pmatrix}
          M \hspace{1.5mm} F    &K_2  &J    \\
          \calH_2                     &     &E
         \end{pmatrix},
\end{equation}
where $M$ is a bounded in $\calH_2$ operator.
Define the rigged Hilbert space $\calH_+ \subset \calH \subset \calH_-$ by
\begin{equation}
\label{e12-27-n}
 \calH_+ \subset \calH \subset \calH_-=\calH_{+1}\oplus \calH_2 \subset \calH_1\oplus \calH_2 \subset \calH_{-1}\oplus \calH_2.
\end{equation}
Define the operators $\dM\in[\calH_+,\calH_-]$, $\dF:\calH\rightarrow\calH_2$, and $K:E\rightarrow\calH_-$ by
\begin{equation}\label{e12-28-n}
 \dM=\left(
       \begin{array}{cc}
         \bA & 2i K_1 J K_2^* \\
         0 & M \\
       \end{array}
     \right),
 \quad
 \dF=\left(
     \begin{array}{cc}
                I & 0 \\
                0 & F \\
              \end{array}
            \right),
 \quad
 K=\left(
     \begin{array}{c}
       K_1 \\
       K_2 \\
     \end{array}
   \right).
\end{equation}

\begin{definition}\label{t12-1-n}
 Let $\Theta_{L}$ be the $L$-system in \eqref{e12-25-n} and let $\Theta_{F}$ be the $F$-system in \eqref{e12-26-n}. Then the aggregate
\begin{equation}
\label{e12-29-n}
 \Theta_{LF} =\Theta_{L}\cdot\Theta_{F}=\begin{pmatrix}
          \dM \hspace{5.5mm} \dF    &K  &J  \\
          \calH_+ \subset \calH \subset \calH_-                   &   &E
         \end{pmatrix},
\end{equation}
with $\calH_+ \subset \calH \subset \calH_-$, $\dM$, $\dF$, and $K$, defined by \eqref{e12-27-n} and \eqref{e12-28-n}, is called  an \textbf{$LF$-coupling} of systems $\Theta_{L}$  and $\Theta_{F}$.
\end{definition}

Taking adjoints in \eqref{e12-28-n} gives
\begin{equation}\label{e-17-M-star}
 \dM^*=\left(
       \begin{array}{cc}
         \bA^* &  0\\
        -2i K_2 J K_1^*  & M^* \\
       \end{array}
     \right),
 \quad
  K^*=\left(
     \begin{array}{c}
       K_1^* \\
       K_2^* \\
     \end{array}
   \right)^T,\quad
 KJ=\left(
     \begin{array}{c}
       K_1 J \\
       K_2 J\\
     \end{array}
   \right),
\end{equation}
and therefore,
\[
\begin{split}
\dM-\dM^* &=\left(
       \begin{array}{cc}
         \bA-\bA^* &  2i K_1 J K_2^*\\
        2i K_2 J K_1^*  & M-M^* \\
       \end{array}
     \right)
=2i \left(
       \begin{array}{cc}
         K_1JK_1^* &   K_1 J K_2^*\\
         K_2 J K_1^*  & K_2 J K_2^* \\
       \end{array}
     \right)=2iKJK^*.\\
  \end{split}
\]
A function
\begin{equation}\label{couple-trasnfer}
 W_{\Theta_{LF}}(z)=I-2i K^*(\dM-z \dF)^{-1}KJ,  \quad  z \in \rho(\dM,\dF),
\end{equation}
will be associated with $LF$-coupling and called the \textbf{transfer  function of $LF$-coupling}.
\begin{theorem}\label{t12-2-n}
 Let  $\Theta$ be the  $LF$-coupling of an $L$-system $\Theta_{L}$ and the $F$-system $\Theta_{F}$. Then the associated transfer functions satisfy
\begin{equation}
\label{e12-30-n}
 W_{\Theta_{LF}}(z)= W_{\Theta_{L}}(z) W_{\Theta_{F}}(z),  \quad  z \in \rho(T) \cap \rho(M,F).
\end{equation}
\end{theorem}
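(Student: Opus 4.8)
The plan is to compute the resolvent $(\dM - z\dF)^{-1}$ explicitly using the block-triangular structure of $\dM$ and $\dF$ and then substitute into the definition \eqref{couple-trasnfer} of $W_{\Theta_{LF}}(z)$, factoring the result into the product $W_{\Theta_L}(z)W_{\Theta_F}(z)$. The key observation is that, thanks to the upper-triangular form of $\dM$ in \eqref{e12-28-n} and the diagonal form of $\dF$, the operator $\dM - z\dF$ reads
\begin{equation*}
 \dM - z\dF = \begin{pmatrix} \bA - zI & 2iK_1 J K_2^* \\ 0 & M - zF \end{pmatrix},
\end{equation*}
whose inverse on $\rho(T)\cap\rho(M,F)$ is again upper-triangular and can be written down by the standard Schur-complement formula for triangular block operators. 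Explicitly, writing $R_1(z) = (\bA - zI)^{-1}$ and $R_2(z) = (M - zF)^{-1}$, the inverse is
\begin{equation*}
 (\dM - z\dF)^{-1} = \begin{pmatrix} R_1(z) & -2i R_1(z) K_1 J K_2^* R_2(z) \\ 0 & R_2(z) \end{pmatrix}.
\end{equation*}
I would verify this by direct multiplication against $\dM - z\dF$ on both sides, which is routine.

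The next step is to feed this block form into $W_{\Theta_{LF}}(z) = I - 2iK^*(\dM - z\dF)^{-1}KJ$ using the forms of $K$, $K^*$, and $KJ$ recorded in \eqref{e-17-M-star}. Carrying out the block matrix product $K^*(\dM-z\dF)^{-1}KJ$ yields a scalar (operator on $E$) expression with three surviving terms: one from the $(1,1)$ block giving $K_1^* R_1(z) K_1 J$, one from the $(2,2)$ block giving $K_2^* R_2(z) K_2 J$, and one cross term from the $(1,2)$ block giving $-2i\, K_1^* R_1(z) K_1 J K_2^* R_2(z) K_2 J$. Thus
\begin{equation*}
 W_{\Theta_{LF}}(z) = I - 2i K_1^* R_1(z) K_1 J - 2i K_2^* R_2(z) K_2 J - 4 K_1^* R_1(z) K_1 J K_2^* R_2(z) K_2 J .
\end{equation*}

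The final step is to recognize this as the product of the two individual transfer functions. By \eqref{e6-3-3} the L-system transfer function is $W_{\Theta_L}(z) = I - 2iK_1^* R_1(z) K_1 J$, and by \eqref{chfunc} the F-system transfer function is $W_{\Theta_F}(z) = I - 2iK_2^* R_2(z) K_2 J$. Multiplying these two out produces exactly the four terms above, with the product of the two correction terms matching the cross term (since $(-2i)(-2i) = -4$). Hence $W_{\Theta_{LF}}(z) = W_{\Theta_L}(z) W_{\Theta_F}(z)$ on the stated domain $\rho(T)\cap\rho(M,F)$, which is where both resolvents exist and where $\rho(\dM,\dF)$ contains this intersection by the triangular structure.

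I expect the main obstacle to be purely bookkeeping rather than conceptual: one must keep careful track of the rigged-space mapping properties so that each operator product is well defined as a map between the appropriate spaces in $\calH_+ \subset \calH \subset \calH_-$. In particular, since $\bA \in [\calH_{+1},\calH_{-1}]$ while $M$ is bounded on $\calH_2$, the off-diagonal entry $2iK_1 J K_2^*$ maps $\calH_2$ into $\calH_{-1}$, and one must confirm that $R_1(z)$ applied after it lands back in $\calH_{+1}$ so that $K_1^*$ may act. This is guaranteed by $z \in \rho(T)$ together with the fact, noted in the preliminaries, that $(\bA - zI)^{-1}$ maps $\calH_-$ boundedly into $\calH_+$; verifying the inversion formula and that $\rho(T)\cap\rho(M,F) \subset \rho(\dM,\dF)$ is the one place where this rigged-space care is essential.
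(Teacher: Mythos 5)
Your proposal is correct and takes essentially the same route as the paper's own proof: an explicit upper-triangular block inverse for $\dM - z\dF$ verified by direct multiplication, substitution into $W_{\Theta_{LF}}(z)=I-2iK^*(\dM-z\dF)^{-1}KJ$, and factorization of the three surviving terms into $W_{\Theta_L}(z)W_{\Theta_F}(z)$ (your bookkeeping is in fact cleaner, since the paper's displayed intermediate steps drop a $K_1^*$ and the middle $J$ in the cross term through evident typos). Your closing rigged-space remarks refine, rather than depart from, the paper's argument.
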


\begin{proof}
Let $z \in \rho(T)\cap \rho(M,F)$. Observe that
\[
\begin{split}
 \dM-z\dF  &=\left(
       \begin{array}{cc}
         \bA & 2i K_1 J K_2^* \\
         0 & M \\
       \end{array}
     \right)-z\left(
     \begin{array}{cc}
                I & 0 \\
                0 & F  \\
              \end{array}
            \right)=\left(
       \begin{array}{cc}
         \bA-z I & 2i K_1 J K_2^* \\
         0 & M-z F  \\
       \end{array}
     \right), \\
\end{split}
\]
and hence
$$
(\dM-z\dF)^{-1}=\left(
       \begin{array}{cc}
         (\bA-z I)^{-1} & -2i(\bA-z I)^{-1} K_1 J K_2^*(M-z F )^{-1} \\
         0 & (M-z F )^{-1} \\
       \end{array}
     \right).
$$
Indeed, by direct check
\[
\begin{aligned}
(&\dM-z \dF)(\dM-z \dF)^{-1}\\
&=\left(
       \begin{array}{cc}
         \bA-z I & 2i K_1 J K_2^* \\
         0 & M-z F  \\
       \end{array}
     \right)\left(
       \begin{array}{cc}
         (\bA-z I)^{-1} & -2i(\bA-z I)^{-1} K_1 J K_2^*(M-z F )^{-1} \\
         0 & (M-z F )^{-1} \\
       \end{array}
     \right)\\
     &=\left(
               \begin{array}{cc}
                 I & 0 \\
                 0 & I \\
               \end{array}
             \right)=I.
\end{aligned}
\]
Consequently,
$$
\begin{aligned}
(\dM-z \dF)^{-1}K&=\left(
       \begin{array}{cc}
         (\bA-z I)^{-1} & -2i(\bA-z I)^{-1} K_1 J K_2^*(M-z F )^{-1} \\
         0 & (M-z F )^{-1} \\
       \end{array}
     \right)\left(
     \begin{array}{c}
       K_1 \\
       K_2 \\
     \end{array}
   \right)\\
   &=\left(
     \begin{array}{c}
       (\bA-z I)^{-1}K_1  -2i(\bA-z I)^{-1} K_1 J K_2^*(M-z F )^{-1}K_2 \\
       (M-z F )^{-1}K_2 \\
     \end{array}
   \right),
   \end{aligned}
$$
and
$$\begin{aligned}
&K^*(\dM-z \dF)^{-1}K\\
&=(K_1^*\quad K_2^*)\left(
     \begin{array}{c}
      (\bA-z I)^{-1}K_1  -2i(\bA-z I)^{-1} K_1 J K_2^*(M-z F )^{-1}K_2 \\
       (M-z F )^{-1}K_2 \\
     \end{array}
   \right)\\
   &=K_1^*(\bA-z I)^{-1}K_1  -2i(\bA-z I)^{-1} K_1 J K_2^*(M-z F )^{-1}K_2+K_2^*(M-z F )^{-1}K_2.
   \end{aligned}
$$
Furthermore, \eqref{e12-30-n} follows from
\[
\begin{split}
 W_{\Theta_{LF}}(z) & =I-2iK^*(\dM-z \dF)^{-1}KJ \\
 & =I-2i[K_1^*(\bA-z I)^{-1}K_1  -2i(\bA-z I)^{-1} K_1 J K_2^*(M-z F )^{-1}K_2\\
 &+K_2^*(M-z F )^{-1}K_2] \\
 & =[I-2iK_1^*(\bA-zI)^{-1}K_1J][I-2iK_2^*(M-z F )^{-1}K_2J] \\
 & =W_{\Theta_{L}}(z) W_{\Theta_{F}}(z).
\end{split}
\]
\end{proof}
A function
\begin{equation}\label{couple-impedance}
 V_{\Theta_{LF}}(z)=K^*(\RE\dM-z \dF)^{-1}K,  \quad  z \in \rho(\RE\dM,\dF),
\end{equation}
will be associated with $LF$-coupling and called the \textbf{impedance  function of $LF$-coupling}.
First, let us show that the impedance function of $LF$-coupling is well defined. It follows from \eqref{e12-28-n} and \eqref{e-17-M-star} that
$$
\RE\dM-zI=\left(
       \begin{array}{cc}
         \RE\bA-z I & i K_1 J K_2^* \\
         -i K_2 J K_1^* & \RE M-z F  \\
       \end{array}
     \right).
$$
Let $x=\left(
         \begin{array}{c}
           x_1 \\
           x_2 \\
         \end{array}
       \right)$,
where $x_1\in\calH_{+1}$, $x_2\in\calH_2$. Consider an equation
$$
\begin{aligned}
(\RE\dM-zI)x&=\left(
       \begin{array}{cc}
         (\RE\bA-z I)x_1 & i K_1 J K_2^* \\
         -i K_2 J K_1^* & \RE M-z F  \\
       \end{array}
     \right)\left(
         \begin{array}{c}
           x_1 \\
           x_2 \\
         \end{array}
       \right)\\
       &=\left(
       \begin{array}{cc}
         (\RE\bA-z I)x_1 + i K_1 J K_2^*x_2 \\
         -i K_2 J K_1^* x_1+ (\RE M-z F )x_2 \\
       \end{array}
     \right)=\left(
         \begin{array}{c}
           K_1 e \\
           K_2 e\\
         \end{array}
       \right),
 \end{aligned}
$$
for some $e\in E$. Then
$$
\begin{aligned}
(\RE\bA-z I)x_1 + i K_1 J K_2^*x_2&=K_1e,\\
-i K_2 J K_1^* x_1+ (\RE M-z F )x_2&=K_2e.
\end{aligned}
$$
Applying $(\RE\bA-zI)^{-1}$ to the first equation and solving the result for $x_1$ yields
$$
x_1=(\RE\bA-zI)^{-1}[K_1e-i K_1 J K_2^*x_2].
$$
Substituting this value of $x_1$ in to the second equation, we have
 $$
 -i K_2 J K_1^* (\RE\bA-zI)^{-1}[K_1e-i K_1 J K_2^*x_2]+ (\RE M-z F )x_2=K_2e,
 $$
or
$$
[\RE M-z F -K_2 J K_1^*(\RE\bA-zI)^{-1}K_1 J K_2^*]x_2=K_2[I+iJK_1^*(\RE\bA-zI)^{-1}K_1]e.
$$
Taking into account that the impedance function of our L-system $\Theta_L$ is given by
$$
V_{\Theta_L}(z)=K_1^*(\RE\bA-zI)^{-1}K_1,
$$
we have
\begin{equation}\label{e-20}
[\RE M-z F -K_2 JV_{\Theta_L}(z) J K_2^*]x_2=K_2[I+iJV_{\Theta_L}(z)]e.
\end{equation}
Multiplying both sides of \eqref{e-20} by $K_2^*(\RE M-z F )^{-1}$ yields
$$
[K_2^*-K_2^*(\RE M-z F )^{-1}K_2 JV_{\Theta_L}(z) J K_2^*]x_2=K_2^*(\RE M-z F )^{-1}K_2[I+iJV_{\Theta_L}(z)]e.
$$
We recall that
$$
V_{\Theta_F}(z)=K_2^*(\RE M-z F )^{-1}K_2,
$$
and obtain
$$
[I-V_{\Theta_F}(z)JV_{\Theta_L}(z)J]K_2^*x_2=V_{\Theta_F}(z)[I+iJV_{\Theta_L}(z)]e.
$$
Let us assume that in addition to $\rho(\RE M,F )\ne0$ we have that the operator-function $[I-V_{\Theta_F}(z)JV_{\Theta_L}(z)J]$ is invertible at some point $z_0\in\dC_+$. Then applying the theorem on holomorphic operator-function \cite[Appendix 2]{Bro} we have that $[I-V_{\Theta_F}(z)JV_{\Theta_L}(z)J]$ is invertible on the entire $\dC_+$. Then
$$
K_2^* x_2=[I-V_{\Theta_F}(z)JV_{\Theta_L}(z)J]^{-1}V_{\Theta_F}(z)[I+iJV_{\Theta_L}(z)]e.
$$
Consequently, \eqref{e-20} can be modified into
$$
\begin{aligned}
(\RE M-z F )x_2&-K_2 JV_{\Theta_L}(z) J [I-V_{\Theta_F}(z)JV_{\Theta_L}(z)J]^{-1}V_{\Theta_F}(z)[I+iJV_{\Theta_L}(z)]e\\
&=K_2[I+iJV_{\Theta_L}(z)]e,
\end{aligned}
$$
which can be solved for $x_2$ as
$$
\begin{aligned}
x_2&=(\RE M-z F )^{-1}\\
&\times\left(K_2 JV_{\Theta_L}(z) J [I-V_{\Theta_F}(z)JV_{\Theta_L}(z)J]^{-1}V_{\Theta_F}(z)[I+iJV_{\Theta_L}(z)]e \right).
\end{aligned}
$$
Thus, under the assumptions that $\rho(\RE M,F )\ne0$ and $[I-V_{\Theta_F}(z)JV_{\Theta_L}(z)J]$ is invertible at some point $z_0\in\dC_+$, the impedance function $V_{\Theta_{LF}}(z)$ is well defined by \eqref{couple-impedance}.

The impedance function $V_{\Theta_{LF}}(z)$ defined in \eqref{couple-impedance} and the transfer function $W_{\Theta_{LF}}(z)$ defined in \eqref{couple-trasnfer} are closely connected.

\begin{lemma}\label{VW}
Let $\Theta_{LF}$ be an $LF$-coupling of the form \eqref{e12-29-n}. Let also $\rho(\RE M,F )\ne0$ and $[I-V_{\Theta_F}(z)JV_{\Theta_L}(z)J]$ be invertible at some point $z_0\in\dC_+$. Then for all $z\in\rho(\dM,\dF)\cap\rho(\RE \dM,\dF)$
\begin{equation}
\label{trans01}
\begin{split}
 V_{\Theta_{LF}}(z)
  &=i[W_{\Theta_{LF}}(z)-I][W_{\Theta_{LF}}(z)+I]^{-1}J \\
  &=i[W_{\Theta_{LF}}(z)+I]^{-1}[W_{\Theta_{LF}}(z)-I]J,
\end{split}
\end{equation}
and
\begin{equation}
\label{trans02}
\begin{split}
 W_{\Theta_{LF}}(z)
  &=[I-iV_{\Theta_{LF}}(z)J][I+iV_{\Theta_{LF}}(z)J]^{-1} \\
  &=[I+iV_{\Theta_{LF}}(z)J]^{-1}[I-iV_{\Theta_{LF}}(z)J].
\end{split}
\end{equation}
\end{lemma}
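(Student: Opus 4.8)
The plan is to reduce both displayed pairs of identities to a single algebraic identity relating $W_{\Theta_{LF}}(z)$ and $V_{\Theta_{LF}}(z)$, and then to obtain everything else by elementary manipulation of scalar-coefficient expressions in $I$, $V_{\Theta_{LF}}(z)J$, and $W_{\Theta_{LF}}(z)$. The only structural input required is the relation $\IM\dM=KJK^*$ verified above (equivalently $\dM=\RE\dM+iKJK^*$), together with the definitions \eqref{couple-trasnfer} and \eqref{couple-impedance}; no use of the internal block forms of $\dM$ and $\dF$ is needed. Throughout I fix $z\in\rho(\dM,\dF)\cap\rho(\RE\dM,\dF)$, abbreviate $V=V_{\Theta_{LF}}(z)$ and $W=W_{\Theta_{LF}}(z)$, and write $R=(\RE\dM-z\dF)^{-1}$, so that $V=K^*RK$.

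First I would establish the key identity
\begin{equation}\label{keyid}
K^*(\dM-z\dF)^{-1}K=[I+iVJ]^{-1}V.
\end{equation}
Since $\dM=\RE\dM+iKJK^*$, I factor $\dM-z\dF=(\RE\dM-z\dF)+iKJK^*=(\RE\dM-z\dF)(I+iRKJK^*)$, whence on the chosen resolvent set $(\dM-z\dF)^{-1}=(I+iRKJK^*)^{-1}R$. Applying the push-through identity $K^*(I+iRKJK^*)^{-1}=(I+iK^*RKJ)^{-1}K^*$ (the standard swap $C(I+DC)^{-1}=(I+CD)^{-1}C$ with $C=K^*$ and $D=iRKJ$) and recalling $K^*RKJ=VJ$ yields \eqref{keyid}. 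I note that the invertibility of $I+iVJ$ on $E$ needed here is automatic: since $\dM-z\dF$ and $\RE\dM-z\dF$ are both boundedly invertible for $z$ in the chosen set, the operator $I+iRKJK^*$ is invertible, and the elementary principle that $I+CD$ is invertible whenever $I+DC$ is (via $(I+CD)^{-1}=I-C(I+DC)^{-1}D$) transfers this to the finite-dimensional operator $I+iK^*RKJ=I+iVJ$.

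With \eqref{keyid} in hand, substitution into \eqref{couple-trasnfer} gives
\[
W=I-2i[I+iVJ]^{-1}VJ=[I+iVJ]^{-1}[I-iVJ],
\]
the second equality being the factorization $(I+iVJ)-2iVJ=I-iVJ$; this is the second line of \eqref{trans02}. The first line of \eqref{trans02} then follows because $I-iVJ=2I-(I+iVJ)$ commutes with $[I+iVJ]^{-1}$ (both orders equal $2[I+iVJ]^{-1}-I$). Finally, solving $[I+iVJ]W=I-iVJ$ for $V$ and using $J=J^{-1}$ gives the first line of \eqref{trans01}, namely $V=i[W-I][W+I]^{-1}J$; the second line follows from the analogous commutation $W-I=-(2I-(W+I))$.

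The main obstacle I anticipate is not the algebra but the bookkeeping in the rigged space $\calH_+\subset\calH\subset\calH_-$ of the coupling: one must check that the factorization of $\dM-z\dF$ and the push-through identity are legitimate when $\dM\in[\calH_+,\calH_-]$, $K\in[E,\calH_-]$, $K^*\in[\calH_+,E]$, and $R$ maps $\calH_-$ back into $\calH_+$, so that each composition lands in the correct space. This is exactly the mechanism behind the $L$-system relations \eqref{e6-3-6}, and the verification here parallels the one in \cite[Section 6.3]{ABT}; the only new point is that all inverses are taken on $\rho(\dM,\dF)\cap\rho(\RE\dM,\dF)$, whose nonemptiness under the stated hypotheses is guaranteed by the well-definedness discussion preceding the lemma.
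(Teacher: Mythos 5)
Your proposal is correct, but it reaches the two identities by a different mechanism than the paper. The paper's proof sandwiches the two additive resolvent identities
$(\RE\dM-z\dF)^{-1}-(\dM-z\dF)^{-1}=i(\dM-z\dF)^{-1}\IM\dM\,(\RE\dM-z\dF)^{-1}$ and its mirror image between $K^*$ and $K$, and uses $\IM\dM=KJK^*$ to produce the symmetric pair $[I+W_{\Theta_{LF}}(z)][I+iV_{\Theta_{LF}}(z)J]=2I$ and $[I+iV_{\Theta_{LF}}(z)J][I+W_{\Theta_{LF}}(z)]=2I$ (equations \eqref{e9-22} and \eqref{eq002}); two-sided bounded invertibility of both factors is then immediate without any auxiliary lemma, and \eqref{trans01}--\eqref{trans02} follow. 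You instead use the multiplicative form of the same structural fact: the factorization $\dM-z\dF=(\RE\dM-z\dF)(I+iRKJK^*)$ together with the push-through identity $C(I+DC)^{-1}=(I+CD)^{-1}C$, obtaining first the explicit formula $K^*(\dM-z\dF)^{-1}K=[I+iV_{\Theta_{LF}}(z)J]^{-1}V_{\Theta_{LF}}(z)$ and then everything else by substitution into \eqref{couple-trasnfer}. Both routes rest identically on $\IM\dM=KJK^*$ (verified just before the lemma) and on the preceding discussion guaranteeing the joint resolvent set is usable, and your bookkeeping in the rigged space is sound since $I+iRKJK^*=R(\dM-z\dF)$ is a composition of invertible maps $\calH_+\to\calH_-\to\calH_+$. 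What your version buys is that invertibility of $I+iV_{\Theta_{LF}}(z)J$ falls out of the two resolvent assumptions via the $I+CD$ versus $I+DC$ principle, and the identity $W_{\Theta_{LF}}(z)+I=2[I+iV_{\Theta_{LF}}(z)J]^{-1}$ gives two-sided invertibility of $W_{\Theta_{LF}}(z)+I$ directly --- note that your appeal to $\dim E<\infty$ is actually unnecessary, since $(I+CD)^{-1}=I-C(I+DC)^{-1}D$ holds in full generality; what the paper's version buys is symmetry --- both one-sided identities appear at once from the two resolvent identities, with no push-through lemma needed. The remaining steps in your write-up (the commutation arguments $I-iVJ=2I-(I+iVJ)$ and $W-I=(W+I)-2I$, and solving for $V$ using $J=J^{-1}$) are all correct.
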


\begin{proof}
The following identity with $z\in\rho(\dM,\dF)\cap\rho(\RE \dM,\dF)$
\[
  (\RE \dM-z \dF)^{-1}-(\dM-z \dF)^{-1}  = i(\dM-z \dF)^{-1}\IM \dM (\RE \dM-z \dF)^{-1},
\]
leads to
\[
\begin{split}
 K^*(\RE \dM-z \dF)^{-1}K-
 & K^*(\dM-z \dF)^{-1}K \\
 & = iK^*(\dM- z \dF)^{-1}KJK^*(\RE \dM-z \dF)^{-1}K.
\end{split}
\]
Now in view of \eqref{couple-trasnfer} and \eqref{couple-impedance}
\[
 2V_{\Theta_{LF}}(z)+i(I-W_{\Theta_{LF}}(z))J=(I-W_{\Theta_{LF}}(z))V_{\Theta_{LF}}(z),
\]
or equivalently,
\begin{equation}
\label{e9-22}
 [I+W_{\Theta_{LF}}(z)][I+iV_{\Theta_{LF}}(z)J]=2I.
\end{equation}
Similarly, the identity
\[
  (\RE \dM-z \dF)^{-1}-(\dM-z \dF)^{-1} = i(\RE \dM-z \dF)^{-1}\IM \dM (\dM-z \dF)^{-1}
\]
with $z\in\rho(\dM,\dF)\cap\rho(\RE \dM,\dF)$ leads to
\begin{equation}
\label{eq002}
 [I+iV_{\Theta_{LF}}(z)J][I+W_{\Theta_{LF}}(z)]=2I.
\end{equation}
The equalities \eqref{e9-22} and \eqref{eq002} show that the operators are boundedly invertible and consequently one obtains \eqref{trans01} and \eqref{trans02}.
\end{proof}

It was shown in \cite[Theorem 12.2.4]{ABT}, \cite{ArTs03} that each constant $J$-unitary operator $B$ on a finite-dimensional Hilbert space $E$ can be realized as a transfer  function of some $F$-system of the
form \eqref{system1}. Let us recall the construction of the realizing $F$-system. Assume that $(\pm 1)$ belongs to the resolvent set of the $J$-unitary operator $B$, and define
\[
 C=i[B-I][B+I]^{-1}J.
\]
As it was shown in the proof of \cite[Theorem 12.2.4]{ABT}, $C$ is a self-adjoint operator. Let also $K :\, E \to E$ be any bounded and boundedly invertible operator. Then the aggregate
\begin{equation}\label{e-26-Theta0}
 \Theta_0 =\begin{pmatrix}
          KC^{-1}(I+iCJ)K^* \hspace{1.5mm} 0    &K  &J    \\
          E                   &   &E
         \end{pmatrix},
\end{equation}
is an $F$-system with $F=0$. By construction, $W_{\Theta_0}(z)\equiv B$. Let $\Theta_L$ be an $L$-system of the form \eqref{e12-25-n}. If we compose the $LF$-coupling $\Theta_{L0}$ of $\Theta_L$ and $\Theta_0$ of the form \eqref{e-26-Theta0}
$$
\Theta_{L0}=\Theta_L\cdot\Theta_0,
$$
then according to Theorem \ref{t12-2-n}
\begin{equation}\label{e-27-n}
 W_{\Theta_{L0}}(z)= W_{\Theta_{L}}(z) W_{\Theta_{0}}(z)=W_{\Theta_{L}}(z)B.
\end{equation}
As it was also shown in the proof of \cite[Theorem 12.2.4]{ABT}, the condition of $(\pm 1)\in\rho(B)$ can be released since $E$ is finite-dimensional. In this case it is easy to see that $B$ can be represented in the form
$B=B_1 B_2$, where $B_j$ is a $J$-unitary operator in $E$ and $(\pm 1) \in \rho(B_j)$, $j=1,2$.  Each of the operators $B_1$ and $B_2$ can be realized (see \cite[Theorem 12.2.4]{ABT}) as transfer functions of two $F$-systems $\Theta_{F_1}$ and $\Theta_{F_2}$, respectively, i.e.,
$$
 W_{\Theta_{F_1}}(z)=B_1, \quad W_{\Theta_{F_2}}(z)=B_2.
$$
Consider the coupling $\Theta_F=\Theta_{F_1} \Theta_{F_2}$ of these $F$-systems as defined in \eqref{e12-29-nn} and apply the multiplication formula \eqref{e12-30-nn}. Then
\[
 W_{\Theta_F}(z)=W_{\Theta_{F_1}}(z)W_{\Theta_{F_2}}(z)=B_1B_2=B.
\]

\section{Systems with one-dimensional input-output and Donoghue classes}\label{s4}

In this Section we are going to apply the concepts and results  covered in Section \ref{s3} to $L$- and $F$-systems with one-dimensional input-output space $\dC$. Let
\begin{equation}\label{e-62}
\Theta_L= \begin{pmatrix} \bA&K_1&\ 1\cr  \calH_{+1} \subset \calH_1 \subset \calH_{-1}  & &\dC\cr \end{pmatrix}
\end{equation}
be a minimal scattering $L$-system of the form \eqref{e12-25-n} with one-dimensional input-output space $\dC$ with the main operator $T$ and the quasi-kernel $\hat A$ of $\RE\bA$. 
Let also
\begin{equation}\label{e-63-F}
 \Theta_{F} =\begin{pmatrix}
          M \hspace{1.5mm} F    &K_2  &1    \\
          \calH_2                     &     &\dC
         \end{pmatrix},
\end{equation}
be a minimal $F$-system of the form \eqref{e12-26-n} also with one-dimensional input-output space $\dC$ and $J=1$. Then the $LF$-coupling $\Theta_{LF}=\Theta_{L}\cdot\Theta_{F}$ of the form \eqref{e12-29-n} takes the reduced form
\begin{equation}\label{e-64-LF}
 \Theta_{LF} =\Theta_{L}\cdot\Theta_{F}=\begin{pmatrix}
          \dM \hspace{5.5mm} \dF    &K  &1  \\
          \calH_+ \subset \calH \subset \calH_-                   &   &\dC
         \end{pmatrix}.
\end{equation}
Let us observe that in the case under consideration the conditions of Lemma \ref{VW} can be weakened since $[1-V_{\Theta_F}(z)V_{\Theta_L}(z)]$ is always invertible at some point $z_0\in\dC_+$. Indeed, suppose $z_1\in\dC_+$ is a point where $1-V_{\Theta_F}(z_1)V_{\Theta_L}(z_1)=0$. Then
\begin{equation}\label{e-65}
V_{\Theta_L}(z_1)=\frac{1}{V_{\Theta_F}(z_1)}.
\end{equation}
We know (see \cite{ABT}) that both $V_{\Theta_F}(z)$ and $V_{\Theta_L}(z)$ are Herglotz-Nevanlinna functions mapping $\dC_+$ into itself. Then left hand side of \eqref{e-65} belongs to the upper half-plane while the right hand side clearly must lie in $\dC_-$ which is a contradiction. Therefore $[1-V_{\Theta_F}(z)V_{\Theta_L}(z)]$ is  invertible at any $z\in\dC_+$.

Now we recall the definitions of Donoghue classes of scalar functions (see \cite{BMkT}, \cite{BMkT-2}, \cite{D}).

Denote by $\mM$ the \textbf{Donoghue class} of all analytic mappings $M$ from $\bbC_+$ into itself  that admits the representation (see \cite{D}, \cite{GT}, \cite{KK74})
 \begin{equation}\label{hernev}
M(z)=\int_\bbR \left
(\frac{1}{\lambda-z}-\frac{\lambda}{1+\lambda^2}\right )
d\mu,
\end{equation}
where $\mu$ is an  infinite Borel measure   and
\begin{equation}\label{e-42-int-don}
\int_\bbR\frac{d\mu(\lambda)}{1+\lambda^2}=1\,,\quad\text{equivalently,}\quad M(i)=i.
\end{equation}
We say (see \cite{BMkT}) that an analytic function $M$ from $\bbC_+$ into itself belongs to the \textbf{generalized Donoghue class} $\sM_\kappa$, ($0\le\kappa<1$) if it admits the representation \eqref{hernev}  where $\mu$ is an infinite Borel measure such that
\begin{equation}\label{e-38-kap}
\int_\bbR\frac{d\mu(\lambda)}{1+\lambda^2}=\frac{1-\kappa}{1+\kappa}\,,\quad\text{equivalently,}\quad M(i)=i\,\frac{1-\kappa}{1+\kappa},
\end{equation}
and to the \textbf{generalized Donoghue class} $\sM_\kappa^{-1}$, ($0\le\kappa<1$)  if it admits the representation \eqref{hernev}
and
\begin{equation}\label{e-39-kap}
\int_\bbR\frac{d\mu(\lambda)}{1+\lambda^2}=\frac{1+\kappa}{1-\kappa}\,,\quad\text{equivalently,}\quad M(i)=i\,\frac{1+\kappa}{1-\kappa}.
\end{equation}
Clearly, $\sM_0=\sM_0^{-1}=\sM$, the (standard) Donoghue class introduced above.

It is shown in \cite[Theorem 11]{BMkT} that the impedance function $V_\Theta(z)$ of an L-system $\Theta$ of the form \eqref{e-62} belongs to the class $\sM$ if and only if the von Neumann parameter $\kappa$ of the main operator $T$ of $\Theta$ is zero. Similar descriptions were given to L-systems $\Theta$ whose impedance functions belong to classes $\sM_\kappa$ and $\sM_\kappa^{-1}$ (see \cite[Theorem 12]{BMkT} and \cite[Theorem 5.4]{BMkT-2}).


Let us introduce the ``perturbed" versions of the Donoghue classes above. We say that a scalar Herglotz-Nevanlinna function $V(z)$ belongs to the \textbf{class  $\sM^Q$ }if it admits the following integral representation
 \begin{equation}\label{e-52-M-q}
V(z)= Q+\int_\bbR\left (\frac{1}{\lambda-z}-\frac{\lambda}{1+\lambda^2}\right )d\mu,\quad Q=\bar Q,
\end{equation}
 and has condition \eqref{e-42-int-don}  on the measure $\mu$. Similarly, we introduce perturbed \textbf{classes} $\sM^Q_\kappa$ and $\sM^{-1,Q}_\kappa$ if normalization conditions  \eqref{e-38-kap} and \eqref{e-39-kap}, respectively, hold  on measure $\mu$ in \eqref{e-52-M-q}.

Let us note that it was shown in \cite{ABT} that every function of a Donoghue class mentioned above (standard, generalized, or perturbed)   belongs to the class of Krein-Langer $Q$-functions introduced in \cite{KL}.

\section{A unimodular transformation of an L-system}\label{s5}

Consider an L-system $\Theta$ of the form \eqref{e-62} with a main operator $T$ and transfer function $W_\Theta(z)$. Let $B$ be a complex number such that $|B|=1$. It was shown in \cite[Theorem 8.2.3]{ABT} (see also \cite{ArTs03}) that there exists another L-system $\Theta_B$ of the form \eqref{e-62} with the same main operator $T$ and such that $W_{\Theta_B}(z)=W_\Theta(z)B$. We rely on this result to put forward the following definition.
\begin{definition}\label{d-7}
An L-system $\Theta_\alpha$ is called a \textbf{unimodular transformation} of an L-system $\Theta$ of the form \eqref{e-62} for some $\alpha\in[0,\pi)$ if
\begin{equation}\label{e-35-uni}
    W_{\Theta_\alpha}(z)=W_\Theta(z)\cdot (-e^{2i\alpha}),
\end{equation}
where $W_\Theta(z)$ and $W_{\Theta_\alpha}(z)$ are transfer functions of the corresponding L-systems.
\end{definition}
Note that $\Theta_{\frac{\pi}{2}}=\Theta$.
It is known (see \cite[Theorem 8.3.1]{ABT}) that if $\Theta_\alpha$ is a unimodular transformation of $\Theta$ and  $V_{\Theta_\alpha}(z)$ is its impedance function then
\begin{equation}\label{e-54-frac}
    V_{\Theta_\alpha}(z)=\frac{\cos\alpha+(\sin\alpha)V_\Theta(z)}{\sin\alpha-(\cos\alpha)V_\Theta(z)},\quad z\in\dC_+.
 \end{equation}
The following theorem shows that the class $\sM$ is in some sense invariant under a unimodular transformation.
\begin{theorem}\label{t-9}
 Let  $\Theta_\alpha$ be a unimodular transformation of an L-system $\Theta$ with the impedance function $V_\Theta(z)$ that belongs to class $\sM$. Then $V_{\Theta_\alpha}(z)\in\sM$.
\end{theorem}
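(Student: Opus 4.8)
The plan is to verify directly, from the explicit formula \eqref{e-54-frac}, that $V_{\Theta_\alpha}$ meets the three requirements defining $\sM$: it maps $\dC_+$ into itself, it is normalized by $V_{\Theta_\alpha}(i)=i$, and in the representation \eqref{hernev} its linear term is absent while the representing measure is infinite. Note first that $V_{\Theta_\alpha}$ depends only on the transfer function $W_{\Theta_\alpha}(z)=W_\Theta(z)\cdot(-e^{2i\alpha})$, hence on $V_\Theta$ through \eqref{e-54-frac}, so no choice of realization enters.

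The first two requirements are immediate. The right-hand side of \eqref{e-54-frac} is the image of $V_\Theta$ under the M\"obius map with real coefficient matrix $\left(\begin{smallmatrix}\sin\alpha & \cos\alpha\\ -\cos\alpha & \sin\alpha\end{smallmatrix}\right)$ of determinant $\sin^2\alpha+\cos^2\alpha=1>0$; such a map sends $\dC_+$ onto $\dC_+$, and its denominator $\sin\alpha-(\cos\alpha)V_\Theta(z)$ has imaginary part $-(\cos\alpha)\Im V_\Theta(z)\neq0$ on $\dC_+$, so $V_{\Theta_\alpha}$ is a genuine Herglotz-Nevanlinna function with no spurious poles. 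Evaluating at $z=i$ and using $V_\Theta(i)=i$, the numerator is $\cos\alpha+i\sin\alpha=e^{i\alpha}$ and the denominator is $\sin\alpha-i\cos\alpha=-ie^{i\alpha}$, so $V_{\Theta_\alpha}(i)=e^{i\alpha}/(-ie^{i\alpha})=i$, which is exactly \eqref{e-42-int-don}.

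The substantive point, and the step I expect to be the main obstacle, is the vanishing of the linear term together with infiniteness of the measure. For the unit-determinant M\"obius map one has $\Im V_{\Theta_\alpha}(iy)=\Im V_\Theta(iy)\big/\bigl|\sin\alpha-(\cos\alpha)V_\Theta(iy)\bigr|^2$. Absence of a linear term means $\lim_{y\to\infty}\Im V_{\Theta_\alpha}(iy)/y=0$, and, once that holds, infiniteness of the measure means $\lim_{y\to\infty}y\,\Im V_{\Theta_\alpha}(iy)=\infty$. Now $V_\Theta\in\sM$ supplies \emph{both} $V_\Theta(iy)=o(y)$ (no linear term) and $y\,\Im V_\Theta(iy)\to\infty$ (infinite measure), and the rotation interchanges the roles of these two facts: the divergence $y\,\Im V_\Theta(iy)\to\infty$ forces $\Im V_{\Theta_\alpha}(iy)/y\to0$, whereas $V_\Theta(iy)=o(y)$ is what must be leveraged to force $y\,\Im V_{\Theta_\alpha}(iy)\to\infty$. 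Bounding the denominator $|\sin\alpha-(\cos\alpha)V_\Theta(iy)|$ from above by $o(y)$ to secure the infinite-measure conclusion when $\cos\alpha\neq0$ is the delicate computation; the equivalence ``$\mu_\alpha(\dR)=\infty$ iff $-1/V_{\Theta_\alpha}$ has no linear term'' that one is tempted to invoke actually fails in a degenerate finite-measure case, so this step cannot be circumvented that way.

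I would secure the hard part structurally instead. By Definition \ref{d-7}, $W_{\Theta_\alpha}(z)=W_\Theta(z)\cdot(-e^{2i\alpha})$ with $|-e^{2i\alpha}|=1$, so by \cite[Theorem 8.2.3]{ABT} the system $\Theta_\alpha$ may be realized with the \emph{same} main operator $T$ as $\Theta$; thus $\Theta$ and $\Theta_\alpha$ share the von Neumann parameter $\kappa$ of $T$. Since $V_\Theta\in\sM$, \cite[Theorem 11]{BMkT} gives $\kappa=0$, and applying that theorem to $\Theta_\alpha$, whose main operator is again $T$ with $\kappa=0$, yields $V_{\Theta_\alpha}\in\sM$. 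On this route the only thing to confirm is that the unimodular transformation leaves $T$ unchanged, the content of the realization theorem invoked in Definition \ref{d-7}; in particular the troublesome infinite-measure condition comes for free, because $V_{\Theta_\alpha}$ is again the impedance function of a minimal L-system built on the same prime symmetric operator $\dA$.
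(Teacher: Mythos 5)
Your proof is correct, but its key mechanism is genuinely different from the paper's. The paper's own proof is two lines: it quotes \cite[Theorem 8.3.2]{ABT} to assert that the impedance of any unimodular transformation automatically admits the integral representation \eqref{e-52-M-q} (so the linear term is absent and the measure is of the admissible class), and then performs exactly the normalization computation from your second paragraph, $V_{\Theta_\alpha}(i)=\bigl(\cos\alpha+i\sin\alpha\bigr)/\bigl(\sin\alpha-i\cos\alpha\bigr)=i$, which forces $Q_\alpha=0$ and \eqref{e-42-int-don}. You correctly identify that the genuinely nontrivial content is the absence of a linear term together with infiniteness of the measure — a point the paper disposes of by citation rather than estimate — and you settle it by a different structural route: by \cite[Theorem 8.2.3]{ABT} (the result underpinning Definition \ref{d-7}) the system $\Theta_\alpha$ can be realized with the same main operator $T$, and the criterion of \cite[Theorem 11]{BMkT} ($V\in\sM$ if and only if the von Neumann parameter $\kappa$ of $T$ vanishes), applied first to $\Theta$ and then to $\Theta_\alpha$, yields the conclusion with no measure-theoretic work at all; your explicit check $V_{\Theta_\alpha}(i)=i$ then becomes a redundant (but reassuring) consistency check, and your middle paragraph of asymptotic estimates is likewise not load-bearing, as you acknowledge. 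What each approach buys: the paper's citation of \cite[Theorem 8.3.2]{ABT} is the shortest path inside the realization formalism, while your argument makes the invariance conceptually transparent — membership in $\sM$ is an attribute of $T$ alone, so any transformation fixing $T$ fixes the class — and it is the same mechanism the paper later exploits for Theorems \ref{t-11} and \ref{t-12}. One small point to make airtight: Definition \ref{d-7} prescribes only the transfer function of $\Theta_\alpha$, not its realization, so to say that $\Theta$ and $\Theta_\alpha$ ``share'' $T$ you should add that minimal L-systems with coinciding transfer functions have unitarily equivalent main operators and that $\kappa$ (and minimality, since $\dA$ and $\calH$ are unchanged) is invariant under this equivalence; with that remark your argument is complete.
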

\begin{proof}
Since $\Theta_\alpha$ be a unimodular transformation of $\Theta$, then for any $\alpha\in[0,\pi)$ relation \eqref{e-54-frac} takes place. It was shown in \cite[Theorem 8.3.2]{ABT} that in this case the function $V_{\Theta_\alpha}(z)$ admits integral representation \eqref{e-52-M-q}. Thus, all we need to show is that $V_{\Theta_\alpha}(i)=i$. Indeed,
$$
V_{\Theta_\alpha}(i)=\frac{\cos\alpha+(\sin\alpha)V_\Theta(i)}{\sin\alpha-(\cos\alpha)V_\Theta(i)}=\frac{\cos\alpha+(\sin\alpha)i}{\sin\alpha-(\cos\alpha)i}=\frac{1}{-i}=i.
$$
\end{proof}
Now we study how  a unimodular transformation affects the class $\sM^Q$.
\begin{theorem}\label{t-10}
 Let  $\Theta_\alpha$ be a non-trivial ($\alpha\ne\pi/2$) unimodular transformation of an L-system $\Theta$ with the impedance function $V_\Theta(z)$ that belongs to class $\sM^Q$. Then $V_{\Theta_{\alpha}}(z)\in\sM^{-Q}$ if and only if $\tan\alpha={Q}/{2}$.
\end{theorem}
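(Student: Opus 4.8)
The plan is to reduce membership in $\sM^{-Q}$ to a single evaluation of $V_{\Theta_\alpha}$ at the point $z=i$, exactly as in the proof of Theorem~\ref{t-9}. First I would invoke \cite[Theorem 8.3.2]{ABT} to guarantee that, whenever $V_\Theta$ has the integral representation \eqref{e-52-M-q}, the transformed function $V_{\Theta_\alpha}$ produced by \eqref{e-54-frac} again admits a representation of the form \eqref{e-52-M-q} with \emph{some} real constant $Q'$ and \emph{some} infinite Borel measure $\mu$. Since the integrand in \eqref{e-52-M-q} evaluated at $z=i$ equals $i/(1+\lambda^2)$, one reads off $Q'=\RE V_{\Theta_\alpha}(i)$ and $\int_\bbR d\mu/(1+\lambda^2)=\IM V_{\Theta_\alpha}(i)$. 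Consequently $V_{\Theta_\alpha}\in\sM^{-Q}$, i.e. $Q'=-Q$ together with the normalization \eqref{e-42-int-don}, holds if and only if the single complex equation $V_{\Theta_\alpha}(i)=-Q+i$ is satisfied.

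Next I would compute $V_{\Theta_\alpha}(i)$. Because $V_\Theta\in\sM^Q$, the normalization \eqref{e-42-int-don} gives $V_\Theta(i)=Q+i$, so \eqref{e-54-frac} at $z=i$ reads
\[
V_{\Theta_\alpha}(i)=\frac{\cos\alpha+(\sin\alpha)(Q+i)}{\sin\alpha-(\cos\alpha)(Q+i)}.
\]
Here the denominator has imaginary part $-\cos\alpha\neq0$ since $\alpha\neq\pi/2$, so the quotient is well defined. Cross-multiplying, the condition $V_{\Theta_\alpha}(i)=-Q+i$ becomes
\[
(-Q+i)\bl[\sin\alpha-(\cos\alpha)(Q+i)\br]=\cos\alpha+(\sin\alpha)(Q+i).
\]
Using $(-Q+i)(Q+i)=-(Q^2+1)$, the left side equals $(Q^2+1)\cos\alpha-Q\sin\alpha+i\sin\alpha$ and the right side equals $\cos\alpha+Q\sin\alpha+i\sin\alpha$. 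The imaginary parts agree identically, and equating the real parts yields $Q^2\cos\alpha=2Q\sin\alpha$, which for $Q\neq0$ is exactly $\tan\alpha=Q/2$. This establishes both implications simultaneously.

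The genuinely load-bearing step is the first one: reducing the (a priori infinite-dimensional) statement ``$V_{\Theta_\alpha}$ has representation \eqref{e-52-M-q} with constant $-Q$ and unit normalization'' to the finite datum $V_{\Theta_\alpha}(i)=-Q+i$. This hinges on the structural fact, borrowed from \cite[Theorem 8.3.2]{ABT}, that the fractional-linear map \eqref{e-54-frac} keeps us inside the family \eqref{e-52-M-q} (no linear term appears and the measure stays infinite); the algebra at $z=i$ is then routine. I would flag that the hypotheses are used sharply: $\alpha\neq\pi/2$ keeps the denominator in \eqref{e-54-frac} from vanishing at $i$, and $Q\neq0$ is needed to cancel $Q$ in $Q^2\cos\alpha=2Q\sin\alpha$ (the degenerate case $Q=0$ is precisely the content of Theorem~\ref{t-9}, where every $\alpha$ preserves the class). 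A pleasant feature worth noting is that the normalization condition alone already forces the correct constant, since the imaginary parts in the cross-multiplied identity match automatically.
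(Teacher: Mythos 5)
Your proof is correct, and at the strategic level it is the same as the paper's: both arguments invoke \cite[Theorem 8.3.2]{ABT} to guarantee that $V_{\Theta_\alpha}$ stays inside the family \eqref{e-52-M-q}, and thereby reduce membership in $\sM^{-Q}$ to the single complex condition $V_{\Theta_\alpha}(i)=-Q+i$, computed from \eqref{e-54-frac} and $V_\Theta(i)=Q+i$. Where you genuinely diverge is in how that condition is solved. The paper separates $V_{\Theta_\alpha}(i)=Q_\alpha+ia_\alpha$ into the two real equations $a_\alpha=1$ and $Q_\alpha=-Q$ (formulas \eqref{e-55-q} and \eqref{e-56-q-int}), first shows $a_\alpha=1$ is equivalent to $\tan\alpha=Q/2$ after discarding the branches $Q=0$ and $\alpha=\pi/2$, and then \emph{verifies} $Q_\alpha=-Q$ under that condition by reconstructing $\cos2\alpha$ and $\sin2\alpha$ from $\tan\alpha=Q/2$, with an attendant sign discussion for $\cos\alpha$ according to the quadrant of $\alpha$. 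Your cross-multiplication, using $(-Q+i)(Q+i)=-(Q^2+1)$ and the observation that the imaginary parts $\sin\alpha$ on both sides cancel identically, collapses all of this to the single real equation $Q^2\cos\alpha=2Q\sin\alpha$, which for $Q\neq0$ is $\tan\alpha=Q/2$; since every step is an equivalence (the denominator is nonzero because its imaginary part is $-\cos\alpha\neq0$ for $\alpha\neq\pi/2$), you get both implications at once, with no half-angle identities, no square roots, and no case analysis on signs. Your hypothesis-tracking is also accurate: $Q\neq0$ enters exactly where the paper discards it as contradicting the definition of $\sM^Q$, and the degenerate case $Q=0$ holding for all $\alpha$ is indeed Theorem \ref{t-9}. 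The only thing the paper's longer route buys is the explicit intermediate formulas \eqref{e-55-q}--\eqref{e-56-q-int} for $Q_\alpha$ and $a_\alpha$, whose analogues it reuses in the proofs of Theorems \ref{t-11} and \ref{t-12}; note, though, that your cross-multiplication trick would streamline those proofs as well, replacing $-Q+i$ by $-Q+ai$ with $a$ as in \eqref{e-44} or \eqref{e-50}.
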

\begin{proof}
Since $V_{\Theta}(z)\in\sM^Q$, then it has integral representation \eqref{e-52-M-q} with $Q\ne0$ and $V_{\Theta}(i)=Q+i$. Then
$$
\begin{aligned}
V_{\Theta_\alpha}(i)&=\frac{\cos\alpha+(\sin\alpha)V_\Theta(i)}{\sin\alpha-(\cos\alpha)V_\Theta(i)}=\frac{\cos\alpha+(\sin\alpha)(Q+i)}{\sin\alpha-(\cos\alpha)(Q+i)}\\
&=\frac{(\cos\alpha+Q\sin\alpha)+i\sin\alpha}{(\sin\alpha-Q\cos\alpha)-i\cos\alpha}=\frac{-Q\cos2\alpha-(1/2)Q^2\sin2\alpha}{(\sin\alpha-Q\sin\alpha)^2+\cos^2\alpha}\\
&+i\,\frac{1}{(\sin\alpha-Q\cos\alpha)^2+\cos^2\alpha}=Q_\alpha+i\int_{\dR}\frac{d\mu_\alpha(\lambda)}{1+\lambda^2}=Q_\alpha+ia_\alpha,
\end{aligned}
$$
where $Q_\alpha$  and  $\mu_\alpha$  are the elements of integral representation \eqref{e-52-M-q} of the function $V_{\Theta_\alpha}(z)$ and $a_\alpha=\int_{\dR}\frac{d\mu_\alpha(\lambda)}{1+\lambda^2}$. Thus,
\begin{equation}\label{e-55-q}
    Q_\alpha=\frac{-Q\cos2\alpha-(1/2)Q^2\sin2\alpha}{(\sin\alpha-Q\cos\alpha)^2+\cos^2\alpha},
\end{equation}
and
\begin{equation}\label{e-56-q-int}
   a_\alpha=\int_{\dR}\frac{d\mu_\alpha(\lambda)}{1+\lambda^2}=\frac{1}{(\sin\alpha-Q\cos\alpha)^2+\cos^2\alpha}.
\end{equation}
If we would like to derive necessary and sufficient conditions on $V_{\Theta_{\alpha}}(z)\in\sM^{-Q}$, then we need to see when $a_\alpha=1$  and $Q_\alpha=-Q$. Setting $a_\alpha=1$ in \eqref{e-56-q-int} yields
$$
(\sin\alpha-Q\cos\alpha)^2+\cos^2\alpha=1,
$$
or
$$
(\sin\alpha-Q\cos\alpha)^2-\sin^2\alpha=0\quad \Leftrightarrow \quad (2\sin\alpha-Q\cos\alpha)\cdot(Q\cos\alpha)=0,
$$
implying that either $Q=0$ or $\alpha=\frac{\pi}{2}$ or $\tan\alpha=Q/2$. Discarding first two options as contradicting to the definition of class $\sM^Q$ or producing trivial transformation, we focus on the third option
\begin{equation}\label{e-41-tan}
    \tan\alpha=\frac{Q}{2}.
\end{equation}
Clearly, under the current set of assumptions, \eqref{e-56-q-int} implies that $a_\alpha=1$ if and only if $\tan\alpha={Q}/{2}$. We observe that in this case \eqref{e-55-q} transforms into
\begin{equation}\label{e-42}
 Q_\alpha={-Q\cos2\alpha-(1/2)Q^2\sin2\alpha}.
\end{equation}

Applying trigonometric identities to \eqref{e-41-tan} yields
$$
\cos^2\alpha=\frac{4}{Q^2+4}\quad\textrm{ and }\quad \sin^2\alpha=\frac{Q^2}{Q^2+4},
$$
and hence
$$
\cos2\alpha=\cos^2\alpha-\sin^2\alpha=\frac{4-Q^2}{Q^2+4}.
$$
Moreover,
$$\cos\alpha=\frac{\pm 2}{\sqrt{Q^2+4}}\quad\textrm{ and }\quad \sin\alpha=\frac{|Q|}{\sqrt{Q^2+4}}.$$
The sign of $\cos\alpha$ above depends on whether $\alpha\in[0,\pi/2)$ (positive) or $\alpha\in(\pi/2,\pi)$ (negative). We also notice that \eqref{e-41-tan} implies that if $Q>0$, then $\alpha\in[0,\pi/2)$ and if $Q<0$, then $\alpha\in(\pi/2,\pi)$. Therefore,
$$
\sin2\alpha=2\sin\alpha\cos\alpha=\frac{\pm 4|Q|}{Q^2+4}=\frac{ 4Q}{Q^2+4}.
$$
Substituting the above values for $\cos2\alpha$ and $\sin2\alpha$ into \eqref{e-42}, we have
$$
Q_\alpha=\frac{-Q(4-Q^2)}{Q^2+4}-\frac{4Q^2 Q}{2(Q^2+4)}=\frac{Q^3-4Q-2Q^3}{Q^2+4}=-\frac{Q(4+Q^2)}{Q^2+4}=-Q.
$$
This completes the proof.
\end{proof}
Let us make one important observation. Clearly, every function $V_1(z)$ of the perturbed class $\sM^Q$ can be represented as
$$
V_1(z)=Q+V_{1,0}(z),
$$
where $V_{1,0}(z)\in\sM$. Theorem \ref{t-10} above shows that for $V_1(z)=V_\Theta(z)\in\sM^Q$ a unimodular transformation with $\tan\alpha=Q/2$ is such that $V_2(z)=V_{\Theta_\alpha}(z)\in\sM^{-Q}$ and hence
$$
V_2(z)=-Q+V_{2,0}(z),
$$
where $V_{2,0}(z)\in\sM$. However, the theorem does not provide a connection between $V_{2,0}(z)$ and $V_{1,0}(z)$ that is not difficult to obtain. Indeed, for $\tan\alpha=Q/2$
\begin{equation}\label{e-49-connection}
\begin{aligned}
V_{2}(z)&=\frac{\cos\alpha+(\sin\alpha)V_1(z)}{\sin\alpha-(\cos\alpha)V_1(z)}=\frac{1+(\tan\alpha) V_1(z)}{\tan\alpha-V_1(z)}=\frac{1+(Q/2) V_1(z)}{Q/2-V_1(z)}\\
&=\frac{2+Q V_1(z)}{Q-2V_1(z)}=\frac{2+Q (Q+V_{1,0}(z))}{Q-2(Q+V_{1,0}(z))}=-\frac{2+Q^2+ QV_{1,0}(z)}{Q+2V_{1,0}(z)}\\
&=-Q+\frac{QV_{1,0}(z)-2}{Q+2V_{1,0}(z)}.
\end{aligned}
\end{equation}
A direct substitution into the above formula yields that $V_2(i)=-Q+i$ which immediately confirms  that $V_{2,0}(z)\in\sM$. Thus, we have established a formula relating $V_{2,0}(z)$ and $V_{1,0}(z)$
\begin{equation}\label{e-50-connection}
    V_{2,0}(z)=\frac{QV_{1,0}(z)-2}{Q+2V_{1,0}(z)}.
\end{equation}

A similar to Theorem \ref{t-10} result takes place for the other two classes $\sM^Q_\kappa$ and $\sM^{-1,Q}_\kappa$.
\begin{theorem}\label{t-11}
 Let  $\Theta_\alpha$ be a non-trivial ($\alpha\ne\pi/2$) unimodular transformation of an L-system $\Theta$ with the impedance function $V_\Theta(z)$ that belongs to class $\sM^Q_\kappa$. Then $V_{\Theta_{\alpha}}(z)\in\sM^{-Q}_\kappa$ if and only if
 \begin{equation}\label{e-43}
 \tan\alpha=\frac{b}{2Q},
 \end{equation}
 where
 \begin{equation}\label{e-44}
    b=Q^2+a^2-1\quad\textrm{ and }\quad a=\frac{1-\kappa}{1+\kappa}.
 \end{equation}

\end{theorem}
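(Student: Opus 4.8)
The plan is to transcribe the argument of Theorem~\ref{t-10} almost verbatim, the only structural change being that the value of the impedance function at $i$ is now $Q+ia$ rather than $Q+i$. Indeed, membership $V_\Theta(z)\in\sM^Q_\kappa$ means, by the representation \eqref{e-52-M-q} under the normalization \eqref{e-38-kap}, that
$$
V_\Theta(i)=Q+ia,\qquad a=\frac{1-\kappa}{1+\kappa},
$$
with $a$ as in \eqref{e-44}.

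First I would substitute $V_\Theta(i)=Q+ia$ into the transformation law \eqref{e-54-frac} and rationalize the resulting quotient
$$
V_{\Theta_\alpha}(i)=\frac{(\cos\alpha+Q\sin\alpha)+ia\sin\alpha}{(\sin\alpha-Q\cos\alpha)-ia\cos\alpha}
$$
by multiplying through by the conjugate of its denominator. Separating real and imaginary parts and writing $V_{\Theta_\alpha}(i)=Q_\alpha+ia_\alpha$, I expect
$$
a_\alpha=\frac{a}{D},\qquad Q_\alpha=\frac{-\tfrac{b}{2}\sin 2\alpha-Q\cos2\alpha}{D},
$$
where $D=(\sin\alpha-Q\cos\alpha)^2+a^2\cos^2\alpha$ and $b=Q^2+a^2-1$ as in \eqref{e-44}. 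The decisive simplification is that the imaginary part of the rationalized numerator collapses to exactly $a$, since the mixed terms $\pm Qa\sin\alpha\cos\alpha$ cancel and $a\cos^2\alpha+a\sin^2\alpha=a$; this is precisely what pins down the value of $\kappa$ and makes $a_\alpha$ depend on $\alpha$ only through $D$. Taking $a=1$, so $b=Q^2$, recovers the formulas \eqref{e-55-q} and \eqref{e-56-q-int}, which is a useful sanity check.

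Next, as in the proof of Theorem~\ref{t-10}, the function $V_{\Theta_\alpha}(z)$ already admits a representation of the form \eqref{e-52-M-q} by \cite[Theorem 8.3.2]{ABT}, so the requirement $V_{\Theta_\alpha}(z)\in\sM^{-Q}_\kappa$ reduces to the two scalar equalities $a_\alpha=a$ and $Q_\alpha=-Q$. Imposing $a_\alpha=a$ is equivalent to $D=1$; expanding $D$ and subtracting the identity $\sin^2\alpha+\cos^2\alpha=1$ factors as $\cos\alpha\,(b\cos\alpha-2Q\sin\alpha)=0$. Discarding the root $\cos\alpha=0$ (the excluded case $\alpha=\pi/2$) leaves exactly $\tan\alpha=b/(2Q)$, which is \eqref{e-43}.

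Finally, for the converse direction I would confirm that $\tan\alpha=b/(2Q)$ forces $Q_\alpha=-Q$ automatically. Since then $D=1$, we have $Q_\alpha=-\tfrac{b}{2}\sin2\alpha-Q\cos2\alpha$; substituting $\sin2\alpha=2t/(1+t^2)$ and $\cos2\alpha=(1-t^2)/(1+t^2)$ with $t=\tan\alpha=b/(2Q)$ gives, after clearing the common denominator $b^2+4Q^2$, the value $Q_\alpha=-Q(b^2+4Q^2)/(b^2+4Q^2)=-Q$. I expect this last piece of trigonometric bookkeeping to be the only point requiring care; no genuinely new obstacle appears, because the Herglotz-Nevanlinna structure and the realizability of the transformed impedance function are inherited from the results already invoked in Theorem~\ref{t-10}.
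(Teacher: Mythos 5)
Your proposal is correct and follows the paper's proof step for step in its main structure: the same reduction of membership in $\sM^{-Q}_\kappa$ to the two scalar conditions $a_\alpha=a$ and $Q_\alpha=-Q$ (with realizability of the transformed impedance supplied by \cite[Theorem 8.3.2]{ABT}, exactly as in Theorem \ref{t-10}), the same rationalization of $V_{\Theta_\alpha}(i)$ yielding $a_\alpha=a/D$ and $Q_\alpha=\bigl(-\tfrac{b}{2}\sin2\alpha-Q\cos2\alpha\bigr)/D$, and the same factorization $\cos\alpha\,(b\cos\alpha-2Q\sin\alpha)=0$ that, after discarding $\cos\alpha=0$, gives \eqref{e-43}. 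The one place you genuinely depart from the paper is the closing verification that \eqref{e-43} forces $Q_\alpha=-Q$: the paper extracts $\cos\alpha$ and $\sin\alpha$ individually via \eqref{e-48}--\eqref{e-49} and then runs a four-case sign analysis over the quadrants and the signs of $b$ and $Q$ to make the products come out right, whereas you substitute $\sin2\alpha=2t/(1+t^2)$ and $\cos2\alpha=(1-t^2)/(1+t^2)$ with $t=\tan\alpha=b/(2Q)$. Since these double-angle identities hold for every $\alpha$ with $\cos\alpha\ne0$ irrespective of quadrant, your route computes $Q_\alpha=-Q(b^2+4Q^2)/(b^2+4Q^2)=-Q$ in one line and eliminates the sign bookkeeping that is arguably the most delicate part of the paper's argument (and, notably, would also streamline the paper's proof of Theorem \ref{t-12}, where the same case analysis is repeated). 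Your sanity check that $a=1$, $b=Q^2$ recovers \eqref{e-55-q}--\eqref{e-56-q-int} and $\tan\alpha=Q/2$ of Theorem \ref{t-10} is accurate. No gaps.
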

\begin{proof}
Since $V_{\Theta}(z)\in\sM^Q_\kappa$, then it has integral representation \eqref{e-52-M-q} with $Q\ne0$ and $V_{\Theta}(i)=Q+ai$, where $a$ is defined in \eqref{e-44}. Then
$$
\begin{aligned}
V_{\Theta_\alpha}(i)&=\frac{\cos\alpha+(\sin\alpha)V_\Theta(i)}{\sin\alpha-(\cos\alpha)V_\Theta(i)}=\frac{\cos\alpha+(\sin\alpha)(Q+ai)}{\sin\alpha-(\cos\alpha)(Q+ai)}\\
&=\frac{(\cos\alpha+Q\sin\alpha)+ia\sin\alpha}{(\sin\alpha-Q\cos\alpha)-ia\cos\alpha}=\frac{(1/2)(1-Q^2-a^2)\sin2\alpha-Q\cos2\alpha}{(\sin\alpha-Q\sin\alpha)^2+a^2\cos^2\alpha}\\
&+i\,\frac{a}{(\sin\alpha-Q\cos\alpha)^2+a^2\cos^2\alpha}=Q_\alpha+i\int_{\dR}\frac{d\mu_\alpha(\lambda)}{1+\lambda^2}=Q_\alpha+ia_\alpha,
\end{aligned}
$$
where $Q_\alpha$  and  $\mu_\alpha$  are the elements of integral representation \eqref{e-52-M-q} of the function $V_{\Theta_\alpha}(z)$ and $a_\alpha=\int_{\dR}\frac{d\mu_\alpha(\lambda)}{1+\lambda^2}$.
Thus,
\begin{equation}\label{e-55-q'}
    Q_\alpha=\frac{(1/2)(1-Q^2-a^2)\sin2\alpha-Q\cos2\alpha}{(\sin\alpha-Q\cos\alpha)^2+a^2\cos^2\alpha},
\end{equation}
and
\begin{equation}\label{e-56-q-int'}
   a_\alpha=\frac{a}{(\sin\alpha-Q\cos\alpha)^2+a^2\cos^2\alpha}.
\end{equation}
If we would like to derive necessary and sufficient conditions on $V_{\Theta_{\alpha}}(z)\in\sM^{-Q}_\kappa$, then we need to see when $a_\alpha=a$  and $Q_\alpha=-Q$. Setting $a_\alpha=a$ in \eqref{e-56-q-int} yields
$$
(\sin\alpha-Q\cos\alpha)^2+a\cos^2\alpha=1,
$$
or
$$
\sin\alpha-2Q\sin\alpha\cos\alpha+Q^2\cos^2\alpha+a^2\cos^2\alpha=1,
$$
that is equivalent to
$$
(Q^2+a^2-1)\cos^2\alpha-2Q\sin\alpha\cos\alpha=0.
$$
Using \eqref{e-44} we get
$$
\cos\alpha(b\cos\alpha-2Q\sin\alpha)=0.
$$
Since $\alpha\ne\pi/2$ by the condition of our theorem, then we have
$$
b\cos\alpha-2Q\sin\alpha=0,
$$
or $\tan\alpha=\frac{b}{2Q}$. Thus we have just proven that \eqref{e-43} is equivalent to $a_\alpha=a$. All we need to show than that in the case when \eqref{e-43} holds, $Q_\alpha=-Q$.
We observe that if  $a_\alpha=a$, \eqref{e-55-q'} transforms into
\begin{equation}\label{e-47}
 Q_\alpha=(1/2)(1-Q^2-a^2)\sin2\alpha-Q\cos2\alpha=-\frac{b}{2}\sin2\alpha-Q\cos2\alpha.
\end{equation}
Applying trigonometric identities to \eqref{e-43} yields
$$
\cos^2\alpha=\frac{4Q^2}{4Q^2+b^2}\quad\textrm{ and }\quad \sin^2\alpha=\frac{b^2}{4Q^2+b^2},
$$
and hence
$$
\cos2\alpha=\cos^2\alpha-\sin^2\alpha=\frac{4Q^2-b^2}{4Q^2+b^2}.
$$
Moreover,
\begin{equation}\label{e-48}
\cos\alpha=\frac{ 2|Q|}{\sqrt{4Q^2+b^2}}\quad\textrm{ and }\quad \sin\alpha=\frac{|b|}{\sqrt{4Q^2+b^2}}.
\end{equation}
Assume that $\alpha\in(0,\pi/2)$. Then $\tan\alpha>0$ and \eqref{e-43} implies that $|b/2Q|>0$ which means that either: (i) $b>0$ and $Q>0$ or (ii) $b<0$ and $Q<0$. Since both $\cos\alpha$ and $\sin\alpha$ are positive in the first quadrant, then  \eqref{e-48} will turn into
\begin{equation}\label{e-49}
\cos\alpha=\frac{ \pm 2Q}{\sqrt{4Q^2+b^2}}\quad\textrm{ and }\quad \sin\alpha=\frac{\pm b}{\sqrt{4Q^2+b^2}},
\end{equation}
where $(+)$ sign in both formulas is taken in the case (i) and $(-)$ sign, respectively, in the case of (ii).

Now assume that $\alpha\in(\pi/2,\pi)$. Then $\tan\alpha>0$ and \eqref{e-43} implies that $|b/2Q|<0$ which means that either: (iii) $b>0$ and $Q<0$ or (iv) $b<0$ and $Q>0$. But this time we are in the second quadrant and hence $\cos\alpha<0$ while $\sin\alpha>0$. Consequently, formula \eqref{e-49} is true again in the sense that $(+)$ sign in both formulas is taken in the case (iii) and $(-)$ sign in the case (iv). Thus in all the possible cases (i)--(iv) the signs in the numerators in \eqref{e-49} match.

We have then
$$
\begin{aligned}
Q_\alpha&=-\frac{b}{2}\sin2\alpha-Q\cos2\alpha=-b\sin\alpha\cos\alpha-Q\cos2\alpha\\
&=-\frac{2b(\pm Q)(\pm b)|}{4Q^2+b^2}-\frac{Q(4Q^2-b^2)}{4Q^2+b^2}=-\frac{2b^2Q+Q(4Q^2-b^2)}{4Q^2+b^2}\\
&=(-Q)\,\frac{2b^2+4Q^2-b^2}{4Q^2+b^2}=-Q.
\end{aligned}
$$
This completes the proof.
\end{proof}

A similar result takes place for the class $\sM^{-1,Q}_\kappa$.
\begin{theorem}\label{t-12}
 Let  $\Theta_\alpha$ be a non-trivial ($\alpha\ne\pi/2$) unimodular transformation of an L-system $\Theta$ with the impedance function $V_\Theta(z)$ that belongs to class $\sM^{-1,Q}_\kappa$. Then $V_{\Theta_{\alpha}}(z)\in\sM^{-1,-Q}_\kappa$ if and only if \eqref{e-43} holds true for
  \begin{equation}\label{e-50}
    b=Q^2+a^2-1\quad\textrm{ and }\quad a=\frac{1+\kappa}{1-\kappa}.
 \end{equation}
 \end{theorem}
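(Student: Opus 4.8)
The plan is to repeat the argument of Theorem \ref{t-11} essentially verbatim, the only change being the value of the normalization constant $a$. Since $V_\Theta(z)\in\sM^{-1,Q}_\kappa$, its integral representation \eqref{e-52-M-q} together with the normalization \eqref{e-39-kap} yields $V_\Theta(i)=Q+ai$, where now $a=\frac{1+\kappa}{1-\kappa}$ as specified in \eqref{e-50}, rather than $\frac{1-\kappa}{1+\kappa}$. First I would substitute this value into the unimodular transformation formula \eqref{e-54-frac} at $z=i$ and separate the result into real and imaginary parts. The computation is identical to the one in Theorem \ref{t-11}: the imaginary part of the numerator collapses to $a$ by virtue of $a\cos^2\alpha+a\sin^2\alpha=a$, and the real part reduces to $\tfrac12(1-Q^2-a^2)\sin2\alpha-Q\cos2\alpha$. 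Hence one obtains precisely the formulas \eqref{e-55-q'} and \eqref{e-56-q-int'} for $Q_\alpha$ and $a_\alpha$, with $a$ now taking the value in \eqref{e-50}.

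The membership $V_{\Theta_\alpha}(z)\in\sM^{-1,-Q}_\kappa$ is equivalent to the two scalar conditions $a_\alpha=a$ and $Q_\alpha=-Q$, where $a=\frac{1+\kappa}{1-\kappa}$ is the \emph{same} constant, since both $\sM^{-1,Q}_\kappa$ and $\sM^{-1,-Q}_\kappa$ share the normalization \eqref{e-39-kap}. Setting $a_\alpha=a$ in \eqref{e-56-q-int'} and abbreviating $b=Q^2+a^2-1$ as in \eqref{e-50}, the equation $(\sin\alpha-Q\cos\alpha)^2+a^2\cos^2\alpha=1$ factors, after using $\sin^2\alpha=1-\cos^2\alpha$, as $\cos\alpha\,(b\cos\alpha-2Q\sin\alpha)=0$. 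Because $\alpha\ne\pi/2$ forces $\cos\alpha\ne0$, this is equivalent to $\tan\alpha=\frac{b}{2Q}$, which is exactly \eqref{e-43}. This settles the necessity and one half of the equivalence.

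For the converse I would assume \eqref{e-43} and insert the resulting $\cos2\alpha=\frac{4Q^2-b^2}{4Q^2+b^2}$ and $\sin2\alpha=\frac{4Qb}{4Q^2+b^2}$ (obtained from the quadrant sign analysis of \eqref{e-48}--\eqref{e-49}) into the reduced expression $Q_\alpha=-\tfrac{b}{2}\sin2\alpha-Q\cos2\alpha$, which under $a_\alpha=a$ follows from \eqref{e-55-q'} as in \eqref{e-47}; the same cancellation $\frac{2b^2Q+Q(4Q^2-b^2)}{4Q^2+b^2}$ collapses to $-Q$. There is no genuine obstacle here: throughout the proof of Theorem \ref{t-11} the constant $a$ enters only as a positive free parameter, and the final simplification never invoked the specific relation $a=\frac{1-\kappa}{1+\kappa}$. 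The only point deserving a moment's care is to confirm that the sign bookkeeping in cases (i)--(iv) behind \eqref{e-49} remains valid when $a=\frac{1+\kappa}{1-\kappa}\ge 1$; since that analysis depends solely on the signs of $\cos\alpha$, $\sin\alpha$, $Q$, and $b$, and not on the magnitude of $a$, it transfers unchanged, and the proof concludes exactly as for Theorem \ref{t-11}.
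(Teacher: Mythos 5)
Your proof is correct and follows essentially the same route as the paper's: both reduce Theorem \ref{t-12} to the computation in Theorem \ref{t-11} with the new constant $a=\frac{1+\kappa}{1-\kappa}$, derive $a_\alpha=a\Leftrightarrow\tan\alpha=\frac{b}{2Q}$ from the factorization $\cos\alpha\,(b\cos\alpha-2Q\sin\alpha)=0$, and then verify $Q_\alpha=-Q$ by the same trigonometric cancellation. The only cosmetic difference is in the sign bookkeeping: the paper notes that $a>1$ forces $b=Q^2+a^2-1>0$ for all real $Q$, so only the cases $Q>0$ (first quadrant) and $Q<0$ (second quadrant) survive and \eqref{e-49} collapses to the unambiguous formula \eqref{e-51}, whereas you retain the full four-case analysis from Theorem \ref{t-11} and correctly observe that it is harmless, since in every case $\sin2\alpha=\frac{4Qb}{4Q^2+b^2}$ and the final cancellation to $-Q$ goes through unchanged.
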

\begin{proof}
The proof has similar to the one of Theorem \ref{t-11} structure. Performing the same set of derivations as we did in the proof of Theorem \ref{t-11}  we show that \eqref{e-43} holds if and only if $a_\alpha=a$. The main difference in what follows is that since $V_{\Theta}(z)\in\sM^{-1,Q}_\kappa$, then $a>1$ and consequently $b>0$ for any real $Q$. As a result, if we assume that
$\alpha\in(0,\pi/2)$, then we can immediately conclude that $Q>0$ or otherwise we will arrive at a contradiction to $\tan\alpha>0$ in the first quadrant.  Similarly, the assumption $\alpha\in(\pi/2,\pi)$ yields $Q<0$. Consequently, \eqref{e-49} becomes
\begin{equation}\label{e-51}
\cos\alpha=\frac{ 2Q}{\sqrt{4Q^2+b^2}}\quad\textrm{ and }\quad \sin\alpha=\frac{b}{\sqrt{4Q^2+b^2}},
\end{equation}
for any $\alpha\in(0,\pi/2)\cup(\pi/2,\pi)$. Evaluating $Q_\alpha$ as we did in the proof of Theorem \ref{t-11} we obtain
$$
\begin{aligned}
Q_\alpha&=-\frac{b}{2}\sin2\alpha-Q\cos2\alpha=-b\sin\alpha\cos\alpha-Q\cos2\alpha\\
&=-\frac{2b^2Q}{4Q^2+b^2}-\frac{Q(4Q^2-b^2)}{4Q^2+b^2}=-Q.
\end{aligned}
$$
Thus, $V_{\Theta_{\alpha}}(z)\in\sM^{-1,-Q}_\kappa$ and the proof is complete.
\end{proof}
We make another observation similar to the one we made after Theorem \ref{t-10}. Clearly, every function $V_1(z)$ of the perturbed class $\sM^{Q}_\kappa$ (or $\sM^{-1,Q}_\kappa$) can be written as
$$
V_1(z)=Q+V_{1,0}(z),
$$
where $V_{1,0}(z)\in\sM_\kappa$ (or $V_{1,0}(z)\in\sM^{-1}_\kappa$). Theorems \ref{t-11} and \ref{t-12}  show that for $V_1(z)=V_\Theta(z)\in\sM^{Q}_\kappa$ (or $V_1(z)=V_\Theta(z)\in\sM^{-1,Q}_\kappa$) a unimodular transformation with $\tan\alpha=b/2Q$ is such that $V_2(z)=V_\Theta(z)\in\sM^{-Q}$ (or $V_2(z)=V_\Theta(z)\in\sM^{-1,-Q}$) and hence
$$
V_2(z)=-Q+V_{2,0}(z),
$$
where $V_{2,0}(z)\in\sM_\kappa$ (or $V_{2,0}(z)\in\sM^{-1}_\kappa$). However, the theorems do not provide a connection between $V_{2,0}(z)$ and $V_{1,0}(z)$ that is not difficult to obtain. Following \eqref{e-49-connection} for $\tan\alpha=b/2Q$ we get
\begin{equation}\label{e-60-connection}
\begin{aligned}
V_{2}(z)&=\frac{\cos\alpha+(\sin\alpha)V_1(z)}{\sin\alpha-(\cos\alpha)V_1(z)}=\frac{1+(\tan\alpha) V_1(z)}{\tan\alpha-V_1(z)}=\frac{1+(b/2Q) V_1(z)}{b/2Q-V_1(z)}\\
&=\frac{2Q+b V_1(z)}{b-2QV_1(z)}=\frac{2Q+b (Q+V_{1,0}(z))}{b-2Q(Q+V_{1,0}(z))}=-\frac{2Q+bQ+ bV_{1,0}(z)}{2Q^2+2QV_{1,0}(z)-b}\\
&=-Q+\frac{Q^3+Q^2V_{1,0}(z)-bQ-Q-(b/2)V_{1,0}(z)}{Q^2+QV_{1,0}(z)-(b/2)}.
\end{aligned}
\end{equation}
Thus, we have established a formula relating $V_{2,0}(z)$ and $V_{1,0}(z)$
\begin{equation}\label{e-61-connection}
    V_{2,0}(z)=\frac{Q^3+Q^2V_{1,0}(z)-bQ-Q-(b/2)V_{1,0}(z)}{Q^2+QV_{1,0}(z)-(b/2)}.
\end{equation}

The result below immediately follows from Theorems \ref{t-10}--\ref{t-12}.
\begin{corollary}\label{c-13}
Let $\Theta$ be an L-system of the form \eqref{e-62} with the impedance function $V_\Theta(z)$. Then there exists a unique (for a given $Q$) unimodular transformation $\Theta_\alpha$ of $\Theta$ such that its impedance function $V_{\Theta_{\alpha}}(z)$ belongs to exactly one of the disjoint classes $\sM^{-Q}$, $\sM^{-Q}_\kappa$, or $\sM^{-1,-Q}_\kappa$.
\end{corollary}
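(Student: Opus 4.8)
The plan is to read off from the integral representation \eqref{e-52-M-q} of $V_\Theta(z)$ the two pieces of data that sort it into one of the source classes: the real constant $Q=\RE V_\Theta(i)$ and the number $a=\Im V_\Theta(i)=\int_{\dR}d\mu(\lambda)/(1+\lambda^2)>0$. Since $V_\Theta$ is the impedance function of an $L$-system of the form \eqref{e-62}, it is a Herglotz--Nevanlinna function admitting \eqref{e-52-M-q}, so exactly one of the three mutually exclusive alternatives $a=1$, $a<1$, $a>1$ holds. By \eqref{e-42-int-don}, \eqref{e-38-kap}, \eqref{e-39-kap} these correspond, respectively, to $V_\Theta\in\sM^Q$, to $V_\Theta\in\sM^Q_\kappa$ with $\kappa$ recovered from $a=(1-\kappa)/(1+\kappa)$, and to $V_\Theta\in\sM^{-1,Q}_\kappa$ with $\kappa$ recovered from $a=(1+\kappa)/(1-\kappa)$. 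Thus $V_\Theta$ belongs to precisely one of the pairwise disjoint source classes, and this trichotomy is what the whole argument hinges on.

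Next, in each of the three cases I would invoke the matching theorem. If $a=1$, Theorem \ref{t-10} asserts that $V_{\Theta_\alpha}\in\sM^{-Q}$ holds if and only if $\tan\alpha=Q/2$; if $a<1$, Theorem \ref{t-11} gives $V_{\Theta_\alpha}\in\sM^{-Q}_\kappa$ if and only if $\tan\alpha=b/(2Q)$ with $b=Q^2+a^2-1$ as in \eqref{e-44}; and if $a>1$, Theorem \ref{t-12} gives $V_{\Theta_\alpha}\in\sM^{-1,-Q}_\kappa$ under the same condition \eqref{e-43} with $b,a$ as in \eqref{e-50}. In every case the target membership is forced by a single equation $\tan\alpha=c$ for an explicit real $c$ computed from $Q$ and $a$.

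For uniqueness, I would observe that a unimodular transformation $\Theta_\alpha$ is indexed by $\alpha\in[0,\pi)$, and $\tan$ restricts to a bijection of $[0,\pi/2)\cup(\pi/2,\pi)$ onto $\dR$; hence $\tan\alpha=c$ has exactly one solution $\alpha\in[0,\pi)$, automatically with $\alpha\neq\pi/2$, and this $\alpha$ determines $\Theta_\alpha$ uniquely. Finally, the three target classes are themselves pairwise disjoint, being separated by $\Im V_{\Theta_\alpha}(i)=a_\alpha$ equal to $1$, less than $1$, or greater than $1$; because the selected $\alpha$ forces $a_\alpha=a$ in Theorems \ref{t-11} and \ref{t-12} (and $a_\alpha=1$ in Theorem \ref{t-10}), the transformed function lands in the target class of the same type, so it lies in exactly one of $\sM^{-Q}$, $\sM^{-Q}_\kappa$, $\sM^{-1,-Q}_\kappa$.

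I expect the only genuinely delicate point to be bookkeeping rather than new mathematics: one must verify that the trichotomy on $a=\Im V_\Theta(i)$ really does partition the admissible impedance functions so that precisely one theorem applies, and one should flag the degenerate case $Q=0$, where the perturbation vanishes, the target class collapses to $\sM$ (or its $\kappa$-analogue), and Theorem \ref{t-9} shows every $\alpha$ preserves membership. Accordingly, the uniqueness assertion is to be read for a genuine perturbation $Q\neq0$, which is exactly the regime covered by Theorems \ref{t-10}--\ref{t-12}.
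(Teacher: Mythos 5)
Your proposal is correct and follows essentially the same route as the paper: the paper simply states that the corollary ``immediately follows from Theorems \ref{t-10}--\ref{t-12},'' and your write-up is precisely that derivation made explicit --- the trichotomy on $a=\Im V_\Theta(i)$ selecting which of the three theorems applies, the unique solution of $\tan\alpha=c$ in $[0,\pi)\setminus\{\pi/2\}$ giving uniqueness of $\Theta_\alpha$, and the disjointness of the target classes giving the ``exactly one'' clause. Your flagging of the degenerate case $Q=0$ (where Theorem \ref{t-9} makes every $\alpha$ work and uniqueness fails) is a sensible clarification consistent with the nontriviality hypothesis $Q\neq 0$ built into Theorems \ref{t-10}--\ref{t-12}.
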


\section{Control of L-systems}\label{s6}

In this section we are going to formalize the procedure of unimodular transformation of an L-system. We start off with the following definition.
\begin{definition}\label{d-14}
An L-system $\Theta$ of the form \eqref{e-62} is called  \textbf{equivalent} to an LF-system $\Theta_{LF}$ of the form \eqref{e-64-LF} if the transfer mappings $W_\Theta(z)$ and $W_{\Theta_{LF}}(z)$ of both systems coincide on the intersection of their domains of definitions.
\end{definition}
In Section \ref{s4} we mentioned that any constant $J$-unitary operator $B$ on a finite-dimensional Hilbert space $E$ can be realized as a transfer function of an F-system $\Theta_0$ of the form \eqref{e-26-Theta0}. Now we apply this result to the situation treated in Section \ref{s5}. We set
$$
B=-e^{2i\alpha},\quad E=\dC,\quad J=1, \quad \alpha\in\left(0,\frac{\pi}{2}\right)\cup\left(\frac{\pi}{2},\pi\right).
$$
Then the operator $C$ involved in the construction of $\Theta_0$ is
\[
 C=i[B-I][B+I]^{-1}J=i\frac{-e^{2i\alpha}-1}{-e^{2i\alpha}+1}=i\frac{e^{i\alpha}+e^{-i\alpha}}{e^{i\alpha}-e^{-i\alpha}}=\cot\alpha.
\]
Also, the main operator of the F-system $\Theta_0$ of the form \eqref{e-26-Theta0} is
$$
KC^{-1}(I+iCJ)K^*=K(C^{-1}+i)K^*=K(\tan\alpha+i)K^*.
$$
By construction, the operator $K$ in  F-system $\Theta_0$ can be chosen as any bounded and boundedly invertible operator from $E$ to $E$. In our case $E=\dC$ and hence we can chose $K=1$. As a result, the F-system $\Theta_0$ of the form \eqref{e-26-Theta0} in our case boils down to
\begin{equation}\label{e-52-Theta0}
 \Theta_{0,\alpha} =\begin{pmatrix}
          \tan\alpha+i \hspace{2mm} 0    &1  &1    \\
          \dC                   &   &\dC
         \end{pmatrix},\quad \alpha\in\left(0,\frac{\pi}{2}\right)\cup\left(\frac{\pi}{2},\pi\right).
\end{equation}
We know that $W_{\Theta_{0,\alpha}}(z)\equiv -e^{2i\alpha}$.

In the case when $\alpha=\pi/2$, $B=1$ and parameter $C^{-1}$ is undefined. We utilize the approach explained in Section \ref{s5}. Namely, we represent
$$B=1=(-i)(i)=(-e^{2i\cdot\frac{\pi}{4}})\cdot (-e^{2i\cdot\frac{3\pi}{4}})=B_1\cdot B_2.$$
The corresponding $C_1=\cot\frac{\pi}{4}=1$ and  $C_2=\cot \frac{3\pi}{4}=-1$ and
\begin{equation}\label{e-53-Theta0}
 \Theta_{0,\frac{\pi}{4}} =\begin{pmatrix}
          1+i \hspace{2mm} 0    &1  &1    \\
          \dC                   &   &\dC
         \end{pmatrix},\quad
 \Theta_{0,\frac{3\pi}{4}} =\begin{pmatrix}
          -1+i \hspace{2mm} 0    &1  &1    \\
          \dC                   &   &\dC
         \end{pmatrix},
\end{equation}
with $W_{\Theta_{0,\frac{3\pi}{4}}}(z)\equiv -i$  and $W_{\Theta_{0,\frac{\pi}{4}}}(z)\equiv i$ are F-systems of the form \eqref{e-26-Theta0} that realize $B_1$ and $B_2$.

Similarly, in the case when $\alpha=0$, $B=-1$ and parameter $C$ is undefined. We proceed as above and represent
$$B=-1=i^2=(-e^{2i\cdot\frac{3\pi}{4}})\cdot (-e^{2i\cdot\frac{3\pi}{4}})=B_2\cdot B_2.$$
The corresponding $C_2=\cot\frac{3\pi}{4}=-1$  and $ \Theta_{0,\frac{3\pi}{4}}$ is given by \eqref{e-53-Theta0}.
\begin{definition}\label{d-15}
An F-system $\Theta_{0,\alpha}$ of the form \eqref{e-52-Theta0} is called a \textbf{controller} to an L-system $\Theta_L$ of the form \eqref{e-62} corresponding to a unimodular transformation $\Theta_\alpha$ for $\alpha\in\left(0,\frac{\pi}{2}\right)\cup\left(\frac{\pi}{2},\pi\right)$.
\end{definition}
In ``trivial" cases when $\alpha=0$ and $\alpha=\pi/2$ the controller is respectively defined as a coupling of the corresponding F-systems
\begin{equation}\label{e-54-cont}
    \Theta_{0,0} =\Theta_{0,\frac{\pi}{4}}\cdot\Theta_{0,\frac{3\pi}{4}}\quad\textrm{ and }\quad\Theta_{0,\frac{\pi}{2}}=\Theta_{0,\frac{3\pi}{4}}\cdot\Theta_{0,\frac{3\pi}{4}}.
\end{equation}

The following result follows directly from the above discussion.
\begin{theorem}\label{t-16}
Let $\Theta_{LF}$ be an LF-coupling of an L-system $\Theta_L$  of the form \eqref{e-62} and a controller $\Theta_{0,\alpha}$ for $\alpha\in[0,\pi)$, that is
$$
\Theta_{LF}=\Theta_L\cdot \Theta_{0,\alpha}.
$$
 Then $\Theta_{LF}$ is equivalent to a unimodular transformation $\Theta_\alpha$ of $\Theta_L$ for the same value of $\alpha$ and hence $W_{\Theta_{LF}}(z)=W_{\Theta_{\alpha}}(z)$ on the intersection of their domains of definitions.
\end{theorem}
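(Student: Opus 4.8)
The plan is to reduce the statement to the multiplication theorem for $LF$-couplings (Theorem \ref{t12-2-n}) together with the two governing definitions: the definition of a unimodular transformation (Definition \ref{d-7}) and the definition of equivalence (Definition \ref{d-14}). The essential observation, already secured by the construction in Section \ref{s6}, is that the controller $\Theta_{0,\alpha}$ was built precisely so that its transfer function is the constant $-e^{2i\alpha}$, which is exactly the unimodular factor appearing in \eqref{e-35-uni}. So there is essentially nothing analytic to do; the argument is a bookkeeping composition of known pieces.

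First I would dispose of the generic case $\alpha\in\left(0,\frac{\pi}{2}\right)\cup\left(\frac{\pi}{2},\pi\right)$. Here the controller $\Theta_{0,\alpha}$ is the single $F$-system \eqref{e-52-Theta0} with $F=0$, whose transfer function is the constant $W_{\Theta_{0,\alpha}}(z)\equiv -e^{2i\alpha}$, defined on all of $\dC$ since $M-zF=\tan\alpha+i$ is a fixed nonzero scalar. Applying Theorem \ref{t12-2-n} to $\Theta_{LF}=\Theta_L\cdot\Theta_{0,\alpha}$ gives, for $z$ in the common domain,
\[
 W_{\Theta_{LF}}(z)=W_{\Theta_L}(z)\,W_{\Theta_{0,\alpha}}(z)=W_{\Theta_L}(z)\,(-e^{2i\alpha}).
\]
On the other hand, by Definition \ref{d-7} the unimodular transformation $\Theta_\alpha$ of $\Theta_L$ satisfies $W_{\Theta_\alpha}(z)=W_{\Theta_L}(z)\,(-e^{2i\alpha})$. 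Comparing the two expressions yields $W_{\Theta_{LF}}(z)=W_{\Theta_\alpha}(z)$ on the intersection of their domains, which by Definition \ref{d-14} is precisely the assertion that $\Theta_{LF}$ is equivalent to $\Theta_\alpha$.

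Next I would handle the two trivial values $\alpha=0$ and $\alpha=\frac{\pi}{2}$. Here the controller is not a single $F$-system but the coupling of two $F$-systems prescribed in \eqref{e-54-cont}. The point to verify is that this coupling is again an $F$-system whose transfer function is still the constant $-e^{2i\alpha}$. This follows from the $F$-system coupling result \cite[Theorem 12.2.1]{ABT}, by which the coupling of two $F$-systems is again an $F$-system, together with the multiplication formula \eqref{e12-30-nn}, which identifies the transfer function of the coupling as the product of the two constituent constant transfer functions; by the choices made in \eqref{e-54-cont} this product equals $-e^{2i\alpha}$. Once the controller is so identified as an $F$-system with the prescribed constant transfer function, the argument of the previous paragraph applies verbatim.

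The main obstacle is thus not analytic but purely a matter of bookkeeping, and I would be careful about two things. First, I would confirm that the resolvent set $\rho(\dM,\dF)$ on which $W_{\Theta_{LF}}$ is defined really contains the domain on which the identification is claimed: from the block-triangular structure of $\dM-z\dF$ exhibited in the proof of Theorem \ref{t12-2-n} one has $\rho(\dM,\dF)\supseteq\rho(T)\cap\rho(M,F)$, and since $F=0$ forces $\rho(M,F)=\dC$, this reduces to $\rho(T)$, the natural domain of $W_{\Theta_L}$ and hence of $W_{\Theta_\alpha}$. Second, I would check that the sign conventions in the trivial-case decompositions \eqref{e-54-cont} genuinely reproduce the value $-e^{2i\alpha}$ through \eqref{e12-30-nn}, so that the equivalence $W_{\Theta_{LF}}(z)=W_{\Theta_\alpha}(z)$ holds uniformly for every $\alpha\in[0,\pi)$.
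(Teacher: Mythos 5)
Your proposal is correct and takes essentially the same route as the paper, whose entire proof of Theorem \ref{t-16} is the remark that it ``follows directly from the above discussion'' --- that is, precisely your argument: Theorem \ref{t12-2-n} applied to the controller's constant transfer function $-e^{2i\alpha}$, combined with Definitions \ref{d-7} and \ref{d-14}, with the trivial cases $\alpha=0,\pi/2$ handled through \eqref{e12-30-nn} and \eqref{e-54-cont}. Your final sign check is well advised, since carrying it out shows the two couplings in \eqref{e-54-cont} appear interchanged relative to the decompositions in the preceding text (a typo the argument survives).
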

Theorem \ref{t-16} is illustrated on Figure \ref{fig-1}. The following theorem is an analogue of the ``absorbtion property" of the class $\sM$ that was discussed in details in \cite{BMkT-2}.
\begin{theorem}\label{t-17}
Let $\Theta_{L}$ be an L-system of the form \eqref{e-62} such that $V_{\Theta_L}\in\sM$ and let $\Theta_{0,\alpha}$ be a controller with an arbitrary value of $\alpha\in[0,\pi)$.
If $\Theta_{LF}$ is an LF-coupling such that $\Theta_{LF}=\Theta_L\cdot \Theta_{0,\alpha}$,  then  $V_{\Theta_{LF}}(z)\in\sM$.
\end{theorem}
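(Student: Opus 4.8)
The plan is to deduce this statement from two results already established: the equivalence Theorem~\ref{t-16}, which identifies the transfer function of the $LF$-coupling with that of a unimodular transformation, and the invariance Theorem~\ref{t-9}, which shows that the class $\sM$ is preserved under unimodular transformations. The only real work is to transport the identity of \emph{transfer} functions into an identity of \emph{impedance} functions.

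First I would invoke Theorem~\ref{t-16}. Since $\Theta_{LF}=\Theta_L\cdot\Theta_{0,\alpha}$ with the controller $\Theta_{0,\alpha}$ attached to the parameter $\alpha\in[0,\pi)$, that theorem produces a unimodular transformation $\Theta_\alpha$ of $\Theta_L$ (for the same $\alpha$) with $W_{\Theta_{LF}}(z)=W_{\Theta_\alpha}(z)$ on the common domain of definition. Next I would pass to impedance functions. Because the input-output space is one-dimensional and $J=1$, the impedance function of the $LF$-coupling is recovered from its transfer function by the same Cayley-type formula \eqref{trans01} that recovers the impedance of an L-system from its transfer function via \eqref{e6-3-6}; explicitly,
\begin{align*}
V_{\Theta_{LF}}(z)&=i[W_{\Theta_{LF}}(z)+I]^{-1}[W_{\Theta_{LF}}(z)-I]\\
&=i[W_{\Theta_\alpha}(z)+I]^{-1}[W_{\Theta_\alpha}(z)-I]=V_{\Theta_\alpha}(z),
\end{align*}
where the middle equality uses $W_{\Theta_{LF}}=W_{\Theta_\alpha}$. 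Finally, since $V_{\Theta_L}\in\sM$ by hypothesis, Theorem~\ref{t-9} gives $V_{\Theta_\alpha}(z)\in\sM$, whence $V_{\Theta_{LF}}(z)\in\sM$, as claimed.

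The step requiring care is the middle one: to write \eqref{trans01} for the $LF$-coupling I must verify the hypotheses of Lemma~\ref{VW}, namely $\rho(\RE M,F)\ne\emptyset$ and invertibility of $[1-V_{\Theta_F}(z)V_{\Theta_L}(z)]$ at some $z_0\in\dC_+$. The second condition holds automatically in the one-dimensional case by the Herglotz-Nevanlinna sign argument around \eqref{e-65} in Section~\ref{s4}, so it is free. For a controller of the explicit form \eqref{e-52-Theta0} one has $F=0$ and $\RE M=\tan\alpha$, so $\rho(\RE M,F)\ne\emptyset$ exactly when $\alpha\ne 0$; thus the argument runs verbatim for $\alpha\in\left(0,\frac{\pi}{2}\right)\cup\left(\frac{\pi}{2},\pi\right)$. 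I expect the main (though routine) obstacle to be the two remaining values $\alpha=0$ and $\alpha=\pi/2$, which are the trivial cases handled by the coupling definition \eqref{e-54-cont}. These I would reduce to the non-trivial case by applying the multiplication Theorem~\ref{t12-2-n} to the factorization of the controller into F-systems with parameters $\tfrac{\pi}{4},\tfrac{3\pi}{4}$, all of which satisfy $\tan\alpha\ne 0$, so each factor has a well-defined impedance and the composite transfer function again equals $W_{\Theta_\alpha}$.
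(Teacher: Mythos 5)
Your proposal is correct and takes essentially the same route as the paper, whose entire proof is a one-line appeal to Theorem~\ref{t-16} combined with the invariance of the class $\sM$ under unimodular transformations (Theorem~\ref{t-9}). Your extra work---passing from transfer to impedance functions via Lemma~\ref{VW}, checking $\rho(\RE M,F)\ne\emptyset$ and the invertibility of $[1-V_{\Theta_F}(z)V_{\Theta_L}(z)]$ for the controller, and reducing the trivial cases $\alpha=0,\pi/2$ to the coupling definition \eqref{e-54-cont}---simply makes explicit the details the paper leaves implicit.
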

\begin{proof}
 The proof of this result follows from the invariance of the Donoghue class $\sM$ under a unimodular transformation (see \cite{BMkT}, \cite{BMkT-2}, \cite{ABT}) and Theorem \ref{t-16}.
\end{proof}

\begin{figure}
  \begin{center}
  \includegraphics[width=100mm]{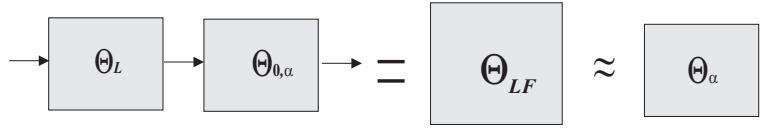}
  \caption{Applying a controller}\label{fig-1}
  \end{center}
\end{figure}

\section{Examples}\label{s7}

\subsection*{Example 1}\label{ex-1}
Consider an L-system
\begin{equation}\label{e8-61}
\Theta^{(\xi)}=
\begin{pmatrix}
\bA^{(\xi)}&K^{(\xi)} &1\\
&&\\
W_1^2\subset L^2_{[0,l]}\subset (W^1_2)_- &{ } &\dC
\end{pmatrix},
\end{equation}
 where
$$\begin{aligned}
\bA^{(\xi)} x&=\frac{1}{i}\frac{dx}{dt}+ix(l)\left[\delta(t-l)-e^{-i\xi l}\delta(t)\right],\\
{\bA^{(\xi)}}^* x&=\frac{1}{i}\frac{dx}{dt}+ix(0)\left[e^{i\xi l}\delta(t-l)-\delta(t)\right],
\end{aligned}
$$
and
$$\aligned
K^{(\xi)}c&=c\cdot \frac{1}{\sqrt 2}[e^{i\xi
l}\delta(t-l)-\delta(t)], \quad (c\in \dC),\\
{K^{(\xi)}}^* x&=\left(x,  \frac{1}{\sqrt 2}[e^{i\xi
l}\delta(t-l)-\delta(t)]\right)=\frac{1}{\sqrt 2}[e^{-i\xi
l}x(l)-x(0)],\\
\endaligned
$$
with $x(t)\in W^1_2$. Here $\bA^{(\xi)}$ is a $(*)$-extension of the operator
$$
T x=\frac{1}{i}\frac{dx}{dt},
$$
 with
 \begin{equation*}
 \dom(T)=\left\{x(t)\,\Big|\,x(t) -\text{ abs. cont.}, x'(t)\in L^2_{[0,l]},\, x(0)=0\right\}.
\end{equation*}
The system of this type was described in details in \cite[Section 8.5]{ABT}. It can also be shown based on this reference that
\begin{equation}\label{e8-62}
   W_{\Theta^{(\xi)}}(z)=1-2i{K^{(\xi)}}^*(\bA^{(\xi)}- z I)^{-1}K^{(\xi)}=e^{i( \xi-z) l}=e^{-i z l}\cdot e^{i\xi l}.
\end{equation}
Set $B^{(\xi)}=e^{i\xi l}$. Then applying \eqref{e6-3-6} we obtain
$$
V_{\Theta^{(\xi)}}(z)=i\frac{W_{\Theta^{(\xi)}}(z)-1}{W_{\Theta^{(\xi)}}(z)+1}=i\frac{B^{(\xi)}e^{-i z l} -1}{B^{(\xi)}e^{-i z l}+1}=i\frac{B^{(\xi)} -e^{i z l}}{B^{(\xi)}+e^{i z l}}.
$$
Note that when $\xi=0$, then $B^{(0)}=1$,  $W_{\Theta^{(0)}}(z)=e^{-i z l}$, and
\begin{equation}\label{e-63-V0}
V_{\Theta^{(0)}}(z)=i\frac{1 -e^{i z l}}{1+e^{i z l}}\quad \textrm{with} \quad V_{\Theta^{(0)}}(i)=i\frac{1 -e^{- l}}{1+e^{- l}}.
\end{equation}

Therefore, $V_{\Theta^{(0)}}(z)\in\sM_{\kappa}$  for $\kappa=e^{-l}$. Comparing \eqref{e8-62} to \eqref{e-35-uni} lets us interpret $B^{(\xi)}=e^{i\xi l}$ as a unimodular transformation of the L-system $\Theta^{(0)}$. In order to find the angle $\alpha$ that corresponds to this unimodular transformation we set $(-e^{2i\alpha})=e^{i\xi l}$ and solve for $\alpha$ to get
\begin{equation}\label{e-63-alpha}
    \alpha=\frac{\xi l-\pi}{2}.
\end{equation}
A controller corresponding to this unimodular transformation is given via \eqref{e-52-Theta0} and is
$$
\Theta_{0,\alpha}=\begin{pmatrix}
          \tan\frac{\xi l-\pi}{2}+i \hspace{2mm} 0    &1  &1    \\
          \dC                   &   &\dC
         \end{pmatrix},
$$
where $\alpha$ is given by \eqref{e-63-alpha} and $\xi l\ne2\pi$. We also have an LF-system
$$
\Theta_{LF}=\Theta^{(0)}\cdot\Theta_{0,\alpha},
$$
that is equivalent to $\Theta^{(\xi)}$ in the sense of Definition \ref{d-14}, that is
$$W_{\Theta_{LF}}(z)=W_{\Theta^{(\xi)}}(z).$$
This LF-system takes form \eqref{e12-29-n} and is explicitly written as
$$
 \Theta_{LF} =\begin{pmatrix}
          \dM \hspace{5.5mm} \dF    &K  &1  \\
          \calH_+ \subset \calH \subset \calH_-                   &   &\dC
         \end{pmatrix},
$$
where
$$
\calH_+ \subset \calH \subset \calH_-=W_1^2\oplus \dC \subset L^2_{[0,l]}\oplus \dC \subset (W^1_2)_-\oplus \dC,
$$
and
$$
\dM=\left(
       \begin{array}{cc}
         \bA^{(0)} & 2i K^{(0)}  \\
         0 & \tan\frac{\xi l-\pi}{2}+i \\
       \end{array}
     \right),
 \quad
 \dF=\left(
     \begin{array}{cc}
                1 & 0 \\
                0 & 0 \\
              \end{array}
            \right),
 \quad
 K=\left(
     \begin{array}{c}
       K^{(0)} \\
       1 \\
     \end{array}
   \right).
$$

\subsection*{Example 2}\label{ex-2}
Now we are going to perturb the function $V_{\Theta^{(0)}}(z)$ in \eqref{e-63-V0} so that it would fall in the class $\sM_\kappa^Q$ for $Q=1$ and $\kappa=e^{-l}$.
We introduce
\begin{equation}\label{e-64-v1}
    V_1(z)=1+i\frac{1 -e^{i z l}}{1+e^{i z l}}.
\end{equation}
Clearly, \eqref{e-63-V0} implies that $V_{1}(z)$ belongs to the class $\sM_\kappa^1$. It can be shown (and checked by direct yet tedious computations) that $V_{1}(z)$ is the impedance function of an L-system of the form
\begin{equation}\label{e6-125}
\Theta_{\rho\mu}=
\begin{pmatrix}
\bA_{\rho\mu}&K &1\\
&&\\
W_2^1\subset L^2_{[0,l]}\subset (W^1_2)_- &{ } &\dC
\end{pmatrix},
\end{equation}
where
\begin{equation}\label{e-107-ex}
\begin{aligned}
\bA_{\rho\mu} x&=i\frac{dx}{dt}+i \frac{1}{\rho+\mu}(\rho x(0)-x(\ell)) \left[\mu\delta(t-\ell)+\delta(t)\right],\\
\bA_{\rho\mu}^* &x=i\frac{dx}{dt}+i \frac{\bar\mu}{\rho+\bar\mu}( x(0)-\rho x(\ell)) \left[\mu\delta(t-\ell)+\delta(t)\right],
\end{aligned}
\end{equation}
$K c=c\cdot \chi$, $(c\in \dC)$, $K^\ast x=(x,\chi)$, $x(t)\in W^1_2$, $\chi=\sqrt{\frac{\rho^2-1}{2|\rho\mu+1|^2}}\,[\mu\delta(t-\ell)-\delta(t)]$.
For the sake of simplicity of further calculations we set $l=\ln 2$. Then the values of parameters $\rho$ and $\mu$ in \eqref{e6-125}-\eqref{e-107-ex} are given by
\begin{equation}\label{e-119-rho-ex2}
\rho=-\frac{343+40\sqrt{13}}{18+45\sqrt{13}},
\end{equation}
and
\begin{equation}\label{e-123-ex2}
    \mu=\frac{1291+25\sqrt{13}+(3087+360\sqrt{13})i}{1291+835\sqrt{13}+(162+405\sqrt{13})i}.
\end{equation}
For the above value of $l=\ln 2$ we have $\kappa=\frac{1}{2}$. Moreover, our function $V_1(z)$ in \eqref{e-64-v1} takes form
\begin{equation}\label{e-68-v1}
    V_1(z)=1+i\frac{1 -2^{i z}}{1+2^{i z}},
\end{equation}
and belongs to the class $\sM_{1/2}^{1}$.
If we want to find a unimodular transformation (and the corresponding controller) that transforms the L-system  $\Theta_{\rho\mu}$ in \eqref{e6-125} into the one whose impedance function  $V_2(z)$ belongs to the class $\sM_{1/2}^{(-1)}$, we apply Theorem \ref{t-11} and formulas \eqref{e-43}-\eqref{e-44}. In our case $Q=1$, and hence $b=a^2$, where
$$
a=\frac{1 -e^{- l}}{1+e^{- l}}=\frac{e^{ l}-1}{e^{ l}+1}=\frac{1}{3},\quad\textrm{ for}\quad l=\ln 2.
$$
Applying \eqref{e-43} gives
$$
\tan\alpha=\frac{b}{2Q}=\frac{a^2}{2}=\frac{1}{18}.
$$
Thus, the value $\alpha=\arctan\frac{1}{18}$ defines the unimodular transformation we seek and provides a controller
$$
\Theta_{0,\alpha}=\begin{pmatrix}
          \frac{1}{18}+i \hspace{2mm} 0    &1  &1    \\
          \dC                   &   &\dC
         \end{pmatrix},
$$
responsible for this transformation in the above sense. Using this value of tangent we obtain
$$
\cos\alpha=\frac{18}{5\sqrt{13}}\quad\textrm{ and }\quad \sin\alpha=\frac{1}{5\sqrt{13}}.
$$
Observe that
$$
V_2(i)=\frac{\cos\alpha+(\sin\alpha)V_1(i)}{\sin\alpha-(\cos\alpha)V_1(i)}=\frac{\frac{18}{5\sqrt{13}}+\frac{1}{5\sqrt{13}}(1+\frac{i}{3})}{\frac{1}{5\sqrt{13}}-\frac{18}{5\sqrt{13}}(1+\frac{i}{3})}=
-\frac{57+i}{51+18i}=-1+\frac{1}{3}i.
$$
This confirms that $V_2(z)\in\sM_{1/2}^{(-1)}$. Finally,
\begin{equation}\label{e-71-v2}
\begin{aligned}
V_2(z)&=\frac{\cos\alpha+(\sin\alpha)V_1(z)}{\sin\alpha-(\cos\alpha)V_1(z)}=\frac{\frac{18}{5\sqrt{13}}+\frac{1}{5\sqrt{13}}(1+i\frac{1 -2^{i z}}{1+2^{i z}})}{\frac{1}{5\sqrt{13}}-\frac{18}{5\sqrt{13}}(1+i\frac{1 -2^{i z}}{1+2^{i z}})}=-\frac{19+i\left(\frac{1 -2^{i z}}{1+2^{i z}}\right)}{17+18i\left(\frac{1 -2^{i z}}{1+2^{i z}}\right)}\\
&=-\frac{19+i+(19-i)2^{iz}}{17+18i+(17-18i)2^{iz}}=-1+\frac{-2+17i-(2+17i)2^{iz}}{17+18i+(17-18i)2^{iz}}.
\end{aligned}
\end{equation}

We have shown that applying a unimodular transformation with $\tan\alpha=1/18$ maps function $V_1(z)\in\sM_{1/2}^{1}$ of the form \eqref{e-68-v1} into a function $V_2(z)\in\sM_{1/2}^{(-1)}$ of the form \eqref{e-71-v2}.

\appendix
\section{Differential Equations and L- and F-systems}\label{A1}

Let $T\in\Lambda$, 
$K$ be a bounded 
linear operator from a finite-dimensional Hilbert space $E$ into
$\calH_-$, $K^*\in [\calH_+,E]$, and $J=J^\ast =J^{-1}\in [E,E]$.
Consider the  following singular system of equations
\begin{equation}\label{e6-37-new-T}
     \left\{%
\begin{array}{ll}
    {i}\frac{d\chi}{dt}+T\chi(t)=KJ\psi_-(t), & \hbox{} \\
    \chi(0)=x\in\dom(T),&\\
    \psi_+=\psi_- -2iK^* \chi(t). & \hbox{} \\
\end{array}%
\right.
\end{equation}
Given an input vector $\psi_-=\varphi_- e^{i{z} t}\in E$, we seek
solutions to the system \eqref{e6-37-new-T} as an output vector
$\psi_+=\varphi_+ e^{i{z} t}\in E$, and a state-space vector
$\chi(t)=xe^{i{z} t}\in \dom(T)$. Substituting the expressions for
$\psi_\pm(t)$ and $\chi(t)$
 allows us to cancel exponential terms and convert
the system \eqref{e6-37-new-T} to the form
\begin{equation}\label{e6-3-1}
\begin{cases}
\begin{aligned} (T&-zI)x=KJ\varphi_-,\\
\varphi_+&=\varphi_--2iK^\ast x,
\end{aligned}
\end{cases}\quad z\in\rho(T).
\end{equation}
The choice of the operator $K$ in the above system is such that $KJ\varphi_-\in\calH_-$. Therefore the first equation of \eqref{e6-3-1} does not, in general, have a regular solution $x\in\dom(T)$. It has, however, a generalized solution $x\in\calH_+$ that can be obtained in the following way. If $z\in\rho(T)$, then we can use the density of $\calH$ in $\calH_-$ and therefore there is a sequence of vectors $\{\alpha_n\}\in\calH$ that approximates $KJ\varphi_-$ in $(-)$-metric. In this case the state space vector $x=\hat R_z(T)KJ\varphi_-\in\calH$ is understood as $\lim_{n\to\infty}(T-zI)^{-1}\alpha_n$, where $\hat R_z(T)$ is the extended to $\calH_-$ by $(-,\cdot)$-continuity resolvent $(T-zI)^{-1}$. But then we can apply \cite[Theorem 4.5.9]{ABT} to conclude that $x\in\calH_+$. This explains the expression $K^*x$ in the second line of \eqref{e6-3-1}. In order to satisfy the condition $\IM T=KJK^*$ we perform the \textit{regularization} of system \eqref{e6-3-1} and use $\bA\in[\calH_+,\calH_-]$,  a $(*)$-extension of $T$ such that $\IM\bA=KJK^*$. This leads to the system
\begin{equation}\label{e6-3-1-T}
\begin{cases}
\begin{aligned} (\bA&-zI)x=KJ\varphi_-,\\
\varphi_+&=\varphi_--2iK^\ast x,
\end{aligned}
\end{cases}\quad z\in\rho(T),
\end{equation}
where $\varphi_- $ is an input vector, $\varphi_+$ is an output
vector, and $x$ is a state space vector of the system. System
\eqref{e6-3-1-T} is the stationary version of the system
\begin{equation}\label{e6-37-new-}
     \left\{%
\begin{array}{ll}
    {i}\frac{d\chi}{dt}+\bA\chi(t)=KJ\psi_-(t), & \hbox{} \\
    \chi(0)=x\in\calH_+,&\\
    \psi_+=\psi_- -2iK^* \chi(t). & \hbox{} \\
\end{array}%
\right.
\end{equation}
Both differential equation systems \eqref{e6-3-1-T} and \eqref{e6-37-new-} are associated with the corresponding L-system $\Theta$ of the form \eqref{e6-3-2}.

Similar connections can be built for F-systems. Let $M$ be a bounded linear operator in $\calH$  and let $F$ be an orthogonal projection in $\calH$, $K\in[E,\calH]$, and $J$ be a bounded, self-adjoint, and unitary operator in $E$. Let also $\IM M=KJK^*$ and $L^2_{[0,\tau_0]}(E)$ be the Hilbert space of $E$-valued functions equipped with an inner product
\begin{equation*}\label{e12-12}
(\varphi,\psi)_{L^2_{[0,\tau_0]}(E)}=
\int_0^{\tau_0}(\varphi,\psi)_E\,dt, \quad
\left(\varphi(t),\,\psi(t)\in L^2_{[0,\tau_0]}(E)\right).
\end{equation*}
 Consider the following system of equations
\begin{equation}\label{e12-16}
     \left\{%
\begin{array}{ll}
    iF\frac{d\chi}{dt}+M\chi(t)=KJ\psi_-(t), & \hbox{} \\
   \chi(0)=x\in\calH, &\\
    \psi_+=\psi_- -2iK^* \chi(t). & \hbox{} \\
\end{array}%
\right.
\end{equation}
Given an input vector $\psi_-=\varphi_- e^{i{z} t}\in E$, we seek solutions to the system \eqref{e12-16} as an output vector $\psi_+=\varphi_+ e^{i{z} t}\in E$ and a state-space vector $\chi(t)=xe^{i{z} t}\in\calH$. Substituting the expressions for $\psi_\pm(t)$ and $\chi(t)$  allows us to cancel exponential terms and convert the system \eqref{e12-16} to the stationary form
\begin{equation}\label{e12-18'}
    \left\{%
\begin{array}{ll}
    (M-zF)x=KJ\varphi_-, & \hbox{} \\
    \varphi_+=\varphi_- -2iK^* x, & \hbox{} \\
\end{array}%
\right.\quad z\in\rho(M,F).
\end{equation}
Both differential equation systems \eqref{e12-16} and \eqref{e12-18'} are associated with the corresponding F-system $\Theta_F$ of the form \eqref{system1}.

It can be shown in \cite{ABT} that L-systems written in the form \eqref{e6-3-1-T} (or \eqref{e6-37-new-}) and F-systems written in the form \eqref{e12-16} (or \eqref{e12-18'}) obey appropriate conservation laws. For details the reader is referred to Sections 6.3 and 12.1 of \cite{ABT}.


\end{document}